\documentclass[11pt]{article}
\usepackage{mathrsfs}
\usepackage{amsfonts}
\usepackage{xcolor}
\usepackage{extarrows}
\usepackage{indentfirst,amssymb,amsmath,graphicx,amsthm,amsfonts,mathrsfs,multicol,amscd}
\usepackage{comment}

\newtheorem{theorem}{Theorem}[section]

\newtheorem{lemma}[theorem]{Lemma}
\newtheorem{proposition}[theorem]{Proposition}

\theoremstyle{definition}

\newtheorem{remark}{Remark}

\begin{document}
\title{\LARGE\bf{Existence and non-existence of extremal functions for inequalities of Bliss-Moser type  } }
\date{}
 \author{Yanyan Guo$^{1}$, Huxiao Luo$^{2}$$\thanks{{\small Corresponding author. E-mail: yanyangcx@126.com (Y. Guo),
 luohuxiao@zjnu.edu.cn (H. Luo), bernhard.ruf@unimi.it (B. Ruf).}}$, Bernhard Ruf
 $^{3}$\\
\small 1 School of Mathematics and Systems Sciences, Wuhan University of Science and Technology, Wuhan 430065, China \\
\small 2 Department of Mathematics, Zhejiang Normal University, Jinhua, Zhejiang, 321004, China \\
\small 3 Istituto Lombardo - Accademia di Scienze e Lettere, 20121 Milan, Italy
}\maketitle

\begin{center}
		\begin{minipage}{13cm}
			\par
			\small  {\bf Abstract:}
            We establish a concentration--compactness theory for a one-dimensional
singular Moser-type functional with logarithmic weight function
$$\sup\limits_{u\in E }\int_0^1 e^{\left(\log\frac{e}{s}+\gamma\log\log\frac{e}{s}\right)\frac{u^2(s)}{s}}ds<\infty,~\gamma\in[0,1],$$
where $$E:= \left\{u \in H^{1}(0, 1): u(0) = 0, \int_0^1 |u'|^2dx=1\right\}.$$
In
particular, we prove compactness for all normalized sequences in the
subcritical range $\gamma<1$, thereby resolving an open problem proposed in
[J. M. do \'{O}, B. Ruf and P. Ubilla, CVPDE 2023].
We also study the critical Bliss-Moser inequality:
\begin{equation}\nonumber \sup\limits_{u\in E }I^c(u)< +\infty,\quad I^c(u)=\int_0^1 e^{\left(\log\frac{e}{s}+\log\log\frac{e}{s}+cs\right)\frac{u^2(s)}{s}}ds.
\end{equation}
For any $c\in(-\infty,+\infty)$, we determine the sharp concentration level $$\sup_{u_j\in E,u_j~\text{is~NCS}}\limsup\limits_{j\to\infty} I^c(u_j)=1+e^2,$$ and obtain explicit sufficient conditions for attainability under $c>c^*$ and non-attainability under $c<-c_*$. This results describe how linear perturbations govern
the transition between compactness, concentration, and attainability at
the critical level.

			\vskip2mm
			\par
			{\bf Keywords:} Trudinger-Moser inequalities, Bliss inequalities, Attainability.
			\vskip2mm
			\par
			{\bf MSC(2010): } 35B33, 35J20, 46E35.
			
		\end{minipage}
	\end{center}

\section{Introduction}
\setcounter{equation}{0}

The Trudinger--Moser inequality is the limiting case of the Sobolev
embedding. In dimension two, the critical Sobolev exponent becomes
infinite, and the optimal polynomial growth is replaced by exponential
growth. More precisely, if $\Omega\subset\mathbb R^{2}$ is a bounded
domain, then
\begin{equation}\label{MT}
 \sup_{\substack{u\in H_0^1(\Omega)\\
 \|\nabla u\|_{L^2(\Omega)}\leq 1}}
 \int_{\Omega}e^{\alpha u^2}\,dx<+\infty
 \quad\Longleftrightarrow\quad
 \alpha\leq 4\pi .
\end{equation}
This sharp result, originating in the works of Trudinger \cite{Trudinger} and Moser \cite{Moser},
has become a fundamental tool in nonlinear analysis, geometric
analysis, and the theory of elliptic equations with critical
exponential growth; see \cite{AA,FMR,LL,Pohozaev,R2,R3}. The Trudinger--Moser inequality \eqref{MT} has also been extended  to unbounded domains, one can refer to the works cited in \cite{AT,IM,R1}. Furthermore, Trudinger-Moser type inequalities have been generalized to include weights in the exponential integral \cite{AS, CT, Tarsi}.

The critical nature of the inequality is reflected in a loss of
compactness. A normalized sequence may converge weakly to zero while
the corresponding exponential functional does not converge to the
value of the weak limit. The classical Moser sequence describes the
usual concentration mechanism: the sequence develops a singular
profile at one point, whereas its Dirichlet energy concentrates at
smaller and smaller scales. The quantitative relation between the
supremum and the maximal level of concentration plays a decisive role
in the existence theory for extremal functions. In particular,
Carleson--Chang \cite{CSYAC}, Struwe \cite{Struwe},
and Flucher \cite{Flucher1992} showed that an extremal exists whenever
the variational supremum lies strictly above the maximal value carried
by concentrating sequences.

In a recent paper, do \'{O}, Ruf, and Ubilla \cite{DRU}
introduced a different one-dimensional Moser-type functional.
For
\[
 \beta_\gamma(t)
 =
 \frac{\log(e/t)+\gamma\log\log(e/t)}{t},
 \qquad 0<t<1,
\]
they considered
\[
  I_\gamma(u)
 =
 \int_0^1
 \exp\!\big(\beta_\gamma(t)u^2(t)\big)\,dt
\]
on the class
\[
 E
 :=
 \left\{
 u\in H^1(0,1):
 u(0)=0,\
 \int_0^1|u'(t)|^2\,dt= 1
 \right\}.
\]
They obtained the following results, which have been extended to the general dimensions $N\geq2$ by \cite{GLR1}.
\begin{proposition}\label{20260911-pp1} (\cite[Proposition 3.1]{DRU}) There holds
\begin{equation}\label{20251104-eq1} \sup\limits_{v\in E}I_\gamma(v)<\infty\Longleftrightarrow \gamma\leq1.
\end{equation}
Define the infinitesimal Moser sequence
\begin{equation}\label{moser}
m_j(s):=
\left\{
\begin{array}{ll}
\aligned
&j^{\frac{1}{2}}s\ ,\quad 0\leq s\leq\frac{1}{j}, \\
&\frac 1 {j^{\frac{1}{2}}}\ ,\quad \frac{1}{j}\leq s\leq1.
\endaligned
\end{array}
\right.
\end{equation}
Then
    \begin{equation}\label{20260608}
           I_\gamma (m_j)\to
    \left\{
    \begin{array}{lcl}
         +\infty,\ \ &\hbox{if}\ \gamma>1,\\
         c\ge e+1>1=I_1(0),\ \ &\hbox{if}\ \gamma=1,\\
         1=J_\gamma(0),\ \ &\hbox{if}\ \gamma<1.
    \end{array}
    \right.
    \end{equation}
\end{proposition}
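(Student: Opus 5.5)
The plan is to treat the two halves of the Proposition separately: the explicit asymptotics of $I_\gamma(m_j)$ in \eqref{20260608}, which also give the direction ``$\Rightarrow$'' of \eqref{20251104-eq1}, and the uniform bound for $\gamma\le 1$. Only the uniform bound looks hard.

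\textbf{Asymptotics of $I_\gamma(m_j)$.} First, $m_j\in E$, since $\int_0^1|m_j'|^2=j\cdot\frac1j=1$. I split $I_\gamma(m_j)=\int_0^{1/j}+\int_{1/j}^1$.

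On $[0,1/j]$ we have $m_j^2(s)/s=js$. Substituting $s=x/j$ with $x\in(0,1]$ gives
\[
\int_0^{1/j}=\int_0^1 \frac1j\exp\Big(x\big(\log\tfrac{ej}{x}+\gamma\log\log\tfrac{ej}{x}\big)\Big)\,dx ,
\]
and for each fixed $x$ the integrand is asymptotically $j^{x-1}e^{x}x^{-x}(\log j)^{\gamma x}$. This concentrates at $x=1$ with width of order $1/\log j$. A Laplace-type estimate, namely the substitution $x=1-y/\log j$ followed by dominated convergence, therefore gives
\[
\int_0^{1/j}\sim e\,(\log j)^{\gamma-1}.
\]
This tends to $+\infty$ if $\gamma>1$, to $e$ if $\gamma=1$, and to $0$ if $\gamma<1$.

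On $[1/j,1]$ the exponent is $\frac{1}{js}\big(\log\frac es+\gamma\log\log\frac es\big)$. This tends to $0$ pointwise, so by dominated convergence on $[\delta,1]$ the contribution from there tends to $1-\delta$. On $[1/j,\delta]$ the integrand is at least $1$, so $\liminf\int_{1/j}^1\ge 1$.

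Combining the two pieces: for $\gamma>1$ we get $I_\gamma(m_j)\to\infty$, which proves ``$\Rightarrow$''; for $\gamma=1$ we get $\liminf I_1(m_j)\ge e+1$. For $\gamma<1$ we still need the upper bound on $\int_{1/j}^{\delta}$. For this I would use the substitution $s=y/j$ and bound the exponent by $\frac{C\log j}{y}$ for $y\ge 1$. This shows the region $y\le K$ contributes $O(j^{C'}/j)$, which needs care, and the rest is small. If this direct estimate is too crude, I would instead note that for $\gamma<1$ the result follows from the compactness statement proved later; but here I would try the direct splitting with $K=\log j$.

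\textbf{Finiteness for $\gamma\le1$.} Since $\beta_\gamma$ increases in $\gamma$, it suffices to treat $\gamma=1$. The basic pointwise tool is Cauchy--Schwarz:
\[
u^2(s)\le s\int_0^s|u'|^2=s\Big(1-\int_s^1|u'|^2\Big).
\]
This only gives an integrand bounded by $\frac es\log\frac es$, which is not integrable, so the deficiency $\int_s^1|u'|^2$ must be exploited. I would change variables $s=e^{1-t}$ and set $\phi(t)=u(s)/\sqrt s$, that is $u(s)=\sqrt s\,\phi$, so that $\frac{u^2}{s}\log\frac es=t\phi^2$. Expanding $\int|u'|^2$ in terms of $\phi$, the cross term integrates to a boundary term, and one obtains a Moser-type constraint in the variable $t$. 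The goal is then to reduce $I_1(u)$ to a functional of the form $\int_1^\infty\exp\big(t\phi^2+\phi^2\log t-t\big)\,dt$ under a constraint that forces $\phi^2\le 1-\varepsilon(t)$, with $\varepsilon(t)\,t\gtrsim\log t$ whenever $\phi$ is near $1$. After that, a Carleson--Chang/Moser-lemma style argument bounds the integral. This means splitting according to whether $t\phi^2-t\le-2\log t$ (giving an integrable tail) or not (in which case the energy constraint confines this set to a bounded-measure window).

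\textbf{Main obstacle.} The main obstacle is the critical $\log\log$ term. The correction $\gamma\log\log\frac es$ sits exactly at the borderline of Moser's lemma, so the sharp form of the energy deficiency is essential. I expect the hardest step to be proving that on the set where $t(1-\phi^2)\le\log t$, the remaining Dirichlet energy pins $\phi$ to a window of $t$-length $O(1)$ uniformly in $u$. My first attempt would be to mimic Moser's original proof of $\int_0^\infty e^{w^2-t}\,dt\le C$.
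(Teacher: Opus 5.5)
Your computation of $I_\gamma(m_j)$ is sound in outline; the Laplace concentration at $s=1/j$ is exactly what Section~3 uses. The finiteness half for $\gamma=1$, however, has a genuine gap. (The paper quotes that half without proof.)

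First, the intermediate claim that the energy forces $\phi^2\le 1-\varepsilon(t)$ with $t\varepsilon(t)\gtrsim\log t$ near the peak is false. For $u=m_j$ one has exactly $\phi(t)^2=e^{-|t-T|}$ with $T=\log(ej)$. So $\phi(T)=1$, and $t(1-\phi^2)<\log t$ on a window of length $\asymp\log T/T$.

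Second, and more seriously, the target you set yourself is too weak. On the set where $t(1-\phi^2)$ is small, the integrand $t^{\phi^2}e^{-t(1-\phi^2)}$ is as large as $t\approx T$, and $T=\log(e/a)$ is unbounded over $E$. Confining that set to a window of length $O(1)$ therefore only gives $O(T)$. Even the true length $O(\log T/T)$ only gives $O(\log T)$. Moser's proof for $\int_0^\infty e^{w^2-t}\,dt$ can afford bounded windows because there the integrand is $\le 1$. Here the $\log\log$ term inflates the peak by the factor $T$, and this must be paid for by an effective width $\asymp 1/T$; that is exactly why $\gamma=1$ is borderline. Also, on $\{t(1-\phi^2)\ge 2\log t\}$ you only get $t^{\phi^2-2}\le t^{-1}$, which is not integrable; use $3\log t$ instead.

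What is missing is a \emph{linear} deficit around the maximum. Let $a$ maximize $u^2(s)/s$, with value $1-\Delta$, and set $T=\log(e/a)$. Because $u\in E$ has energy exactly one, write $u=\frac{u(a)}{a}\min\{s,a\}+v$. The two derivatives are orthogonal, so $\int_0^1|v'|^2=\Delta$ (this is Lemma~\ref{Key}). Cauchy--Schwarz on $v$ then gives $|\phi(t)|\le\sqrt{1-\Delta}\,e^{-|t-T|/2}+\sqrt{\Delta}\sqrt{1-e^{-|t-T|}}$. Hence $1-\phi(t)^2\ge c\bigl(\Delta+\min\{|t-T|,1\}\bigr)$, as in \eqref{eq:sa-envelope}--\eqref{eq:sa-deficit}. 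For $T\ge 2$ this yields $\int_{|t-T|\le 1}t^{\phi^2}e^{-t(1-\phi^2)}\,dt\le 2(T+1)\int_0^\infty e^{-c(T-1)h}\,dh\le C$. On the rest, $\int_{|t-T|>1}t\,e^{-ct}\,dt\le C$. Both bounds are uniform in $u$. This is the same mechanism as the block sums $J_j^k$ of Section~2, whose blocks of relative length $1/\log(1/a_j)$ are precisely the $1/T$-windows.

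Two smaller points on the asymptotics. For $\gamma<1$, bounding the exponent on $[1/j,\delta]$ by $C\log j/y$ cannot work near $y=1$, where the integrand is $\approx ej(\log ej)^{\gamma}$. You need the mirror-image Laplace computation to the right of $1/j$, which again gives $\sim e(\log j)^{\gamma-1}$ (Section~3, parts c)--f)). For $\gamma=1$, a bound $\liminf\ge e+1$ does not show that $I_1(m_j)$ converges. The same right-hand computation supplies the matching upper bound and shows the limit is $1+2e$.
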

\begin{remark}
    When $\gamma=0$, the inequality \eqref{20251104-eq1}  can be regarded as the limiting case of the Bliss inequality \cite{Bliss}, see the relevant discussion in \cite{GLR1,R} for further details. We therefore call  \eqref{20251104-eq1} the Bliss-Moser type inequality.
\end{remark}
At the critical value $\gamma=1$, the loss of compactness is caused by
an infinitesimal shock: the functions tend uniformly to zero, but their
derivatives become unbounded in a shrinking neighbourhood of the
origin. This phenomenon is different from the classical Moser
concentration mechanism.

The same work leaves open the compactness problem for general
sequences when $0\leq\gamma<1$. More precisely, it was unknown whether
the functional $ I_\gamma$ is weakly continuous on $E$ in
this subcritical logarithmic regime. The purpose of the present paper
is to resolve this problem and to investigate the associated
concentration and extremal phenomena in greater detail.

Our first result establishes compactness for the whole range $0\leq\gamma<1$, see Sec. 2.
\begin{theorem}\label{Th2}
If $u_j(0)=0$,
\[
 u_j\rightharpoonup u \quad\text{weakly in }H^1(0,1),
 \qquad
 \int_0^1|u_j'|^2\,dt= 1,
\]
then
\[
  I_\gamma(u_j)\longrightarrow  I_\gamma(u),
 \qquad 0\leq\gamma<1.
\]
\end{theorem}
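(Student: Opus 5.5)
The plan is to split $(0,1)$ into a neighbourhood $(0,\delta)$ of the origin and its complement. On $[\delta,1]$ everything is compact. Near the origin I will prove uniform smallness of $\int_0^\delta\big(e^{\beta_\gamma u_j^2}-1\big)\,ds$, which is where the hypothesis $\gamma<1$ must enter. The basic tool is the Cauchy--Schwarz bound
\[
u_j(s)^2\le s\,a_j(s),\qquad a_j(s):=\int_0^s|u_j'|^2\,dt\le 1,
\]
which gives $\beta_\gamma(s)u_j^2(s)\le a_j(s)\big(\log\frac es+\gamma\log\log\frac es\big)$. More generally, for $r<s$ we have $|u_j(s)|\le |u_j(r)|+\sqrt{(s-r)(a_j(s)-a_j(r))}$.

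\emph{Step 1 (away from $0$).} Since $u_j\rightharpoonup u$ in $H^1(0,1)$ and $H^1(0,1)\hookrightarrow C[0,1]$ compactly, $u_j\to u$ uniformly. The weight $\beta_\gamma$ is bounded on $[\delta,1]$, so $\int_\delta^1 e^{\beta_\gamma u_j^2}\to\int_\delta^1 e^{\beta_\gamma u^2}$. It therefore suffices to show
\[
\lim_{\delta\to0}\limsup_j\int_0^\delta\big(e^{\beta_\gamma u_j^2}-1\big)\,ds=0 .
\]
The term $\int_0^\delta e^{\beta_\gamma u^2}$ is harmless, since $u\in E$ or $u$ has smaller norm, and $\sup_E I_\gamma<\infty$ by Proposition \ref{20260911-pp1}.

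\emph{Step 2 (nonzero weak limit).} If $u\neq0$, then, after passing to a subsequence, $|u_j'|^2\,dt\rightharpoonup\mu$ with $\mu\ge|u'|^2dt$. I fix $\delta$ so small that $\int_\delta^1|u'|^2\ge\eta>0$. Then $\limsup_j a_j(\delta)\le 1-\eta$. Hence on $(0,\delta)$ the integrand satisfies
\[
e^{\beta_\gamma u_j^2}\le (e/s)^{1-\eta/2}\big(\log\tfrac es\big)^{\gamma}
\]
for large $j$. This bound is integrable, and its integral over $(0,\delta)$ tends to $0$ as $\delta\to0$. Vitali's theorem then gives $I_\gamma(u_j)\to I_\gamma(u)$ in this case, and in fact for any $\gamma$ whatsoever.

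\emph{Step 3 (vanishing weak limit: the main obstacle).} If $u=0$, the whole unit of energy may concentrate at $0$, as for the Moser sequence \eqref{moser}, and the crude bound above gives $(e/s)(\log\frac es)^\gamma$, which is not integrable. I will fix $\varepsilon>0$ and split $(0,\delta)$ into two sets:
\[
A_j=\{s:\ u_j(s)^2\le(1-\varepsilon)s\},\qquad B_j=\{s:\ u_j(s)^2>(1-\varepsilon)s\}.
\]

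\emph{The set $A_j$.} Here the integrand is at most $(e/s)^{1-\varepsilon}(\log\frac es)^\gamma$. Because $u_j\to0$ uniformly, $A_j\supset(\rho_j,\delta)$ with $\rho_j\to0$. I will refine this by interpolating between the linear bound and the bound $u_j^2\le\|u_j\|_\infty^2$ in order to control the contribution.

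\emph{The set $B_j$.} A point $s\in B_j$ forces $a_j(s)\ge1-\varepsilon$ and forces $u_j$ to be nearly linear on $(0,s)$, by near-equality in Cauchy--Schwarz. Using the two-point estimate, I will then show the following. If $r_j$ is the smallest point of $B_j$, then for every $s\ge r_j$,
\[
u_j(s)^2\le \big(\sqrt{s_0}+\sqrt{(s-s_0)\varepsilon_j}\big)^2,
\]
for suitable $s_0\sim r_j$ and remaining energy $\varepsilon_j\to0$. Consequently $B_j\subset(0,Cr_j)$.

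\emph{The Moser-type computation.} On $(0,Cr_j)$, I will compute the exponent exactly as in the Moser sequence. For $s$ near the concentration scale $r_j$, the exponent is $\approx\log\frac e{r_j}+\gamma\log\log\frac e{r_j}$ minus a gain proportional to $\frac{s}{r_j}\log\frac e{r_j}$, coming from the deficit $1-u_j^2/s$. Integrating in $s$ gives a contribution of order
\[
\frac{r_j\,(e/r_j)(\log\frac e{r_j})^{\gamma}}{\log\frac e{r_j}}\sim\big(\log\tfrac e{r_j}\big)^{\gamma-1}\to0
\]
precisely because $\gamma<1$. This is the mechanism behind \eqref{20260608}, and I will prove it for general sequences rather than only for $m_j$. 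I expect the main difficulty to be making this uniform: one must quantify, for an arbitrary $u_j$ with $a_j\approx1$ near $r_j$, that the deficit $s-u_j^2(s)$ grows at least linearly on the scale $r_j$. I would first try a change of variables $t=\log\frac es$ and treat $w_j(t)=u_j^2(s)/s$ as a one-dimensional Carleson--Chang type problem.

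Combining the three steps gives $I_\gamma(u_j)\to|(0,1)|=1=I_\gamma(0)$ in the vanishing case and $I_\gamma(u_j)\to I_\gamma(u)$ in general.
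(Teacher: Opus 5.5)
Your Steps 1 and 2 are correct, and Step 2 is genuinely simpler than the paper's Proposition \ref{pro2.2}. Lower semicontinuity on $(\delta,1)$ gives $\limsup_j\int_0^\delta|u_j'|^2\le 1-\eta$. The pointwise bound $u_j^2(s)\le s\int_0^s|u_j'|^2$ then yields the $j$-independent majorant $(e/s)^{1-\eta/2}(\log\frac es)^\gamma$ on $(0,\delta)$. The paper instead splits $u_j=(u_j-u)+w+v$, with $v\in C_c^\infty(0,1)$ and $w$ of small energy, and uses H\"older together with (BM). That gives the stronger bound \eqref{260912-tag1} for some $q>1$, which is not needed here. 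The $A_j$-part of Step 3 is also fine by dominated convergence. The gap is the $B_j$-part. This is the substantive part of the theorem (the paper's Lemma \ref{lm1}), and your proposal announces the decisive estimate there but does not supply it.

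On $B_j\subset(r_j,Cr_j)$ the only bound you actually have is $e^{\beta_\gamma u_j^2}\le\frac es(\log\frac es)^\gamma$. This gives $\int_{B_j}e^{\beta_\gamma u_j^2}\,ds\le C(\log\frac e{r_j})^\gamma$, which is not $o(1)$. To gain the missing factor $(\log\frac e{r_j})^{-1}$ you need a uniform lower bound on the deficit $1-u_j^2(s)/s$ that grows linearly on the concentration scale. That is exactly what you defer as ``the main difficulty''.

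Your anchor point is also the wrong one. At the first point $r_j$ of $B_j$ you only know $u_j^2(r_j)=(1-\varepsilon)r_j$ and $\int_{r_j}^1|u_j'|^2\le\varepsilon$; you do not get $\varepsilon_j\to0$. Nothing forces the deficit to grow away from $r_j$.

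The missing idea is to anchor at a maximiser $a_j$ of $u_j^2(s)/s$, with maximum $1-\delta_j$, and write $u_j=\frac{u_j(a_j)}{a_j}\min\{s,a_j\}+v_j$.
\begin{itemize}
\item The two derivatives are $L^2$-orthogonal because $v_j(0)=v_j(a_j)=0$, so $\int_0^1|v_j'|^2=\delta_j$ (Lemma \ref{Key}).
\item Cauchy--Schwarz then gives $v_j^2\le\delta_j s(1-s/a_j)$ on $[0,a_j]$ and $v_j^2\le\delta_j(s-a_j)$ on $[a_j,1]$.
\item Hence $1-u_j^2(s)/s\ge c\,(\delta_j+\min\{|\log(s/a_j)|,1\})$. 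This is \eqref{eq:sa-deficit}, and it holds for every $u\in E$.
\end{itemize}

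With this in hand the argument closes as follows.
\begin{itemize}
\item If $\delta_j\not\to0$ along a subsequence, dominated convergence applies.
\item Otherwise $a_j\le\|u_j\|_\infty^2/(1-\delta_j)\to0$. The region $|\log(s/a_j)|\ge1$ is handled by the majorant $(e/s)^{1-c}(\log\frac es)^\gamma$.
\item On $|\log(s/a_j)|<1$, substitute $t=\log\frac es$ and set $T_j=\log\frac e{a_j}$. The contribution is at most $C\,T_j^{\gamma}\int_{\mathbb R}e^{-cT_j|\tau|}\,d\tau\le C\,T_j^{\gamma-1}\to0$. This is precisely where $\gamma<1$ enters.
\end{itemize}
The paper reaches the same $O((\log\frac e{a_j})^{\gamma-1})$ bound via Lemmas \ref{Key}--\ref{Key3} and a block decomposition of a neighbourhood of $a_j$.
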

Thus the logarithmic correction with $\gamma<1$ is strong enough to
exclude the infinitesimal-shock loss of compactness. This gives a
complete answer to the compactness question posed in
\cite{DRU} and provides the corresponding
concentration--compactness alternative.

At the critical value $\gamma=1$,   denote \begin{equation}\label{20251103-e1}
I(v)=\int_0^1 e^{\left(\log\frac{e}{s}+\log\log\frac{e}{s}\right)\frac{v^2(s)}{s}}ds.
\end{equation}
From \cite{DRU}, the inequality $\sup\limits_{v\in E}I(v)<\infty$ is critical with loss of compactness: the functional $I$ fails to be weakly
continuous along the infinitesimal Moser sequence $m_j$.
In this article, we further prove that the exact value of the constant $c=\lim\limits_{j\to\infty}I(m_j)$ in \eqref{20260608} is $1+2e$. That is
$$\lim\limits_{j\to\infty}\int_0^1 e^{\left(\log\frac{e}{s}+\log\log\frac{e}{s}\right)\frac{m_j^2}{s}}ds=1+2e>1=I(0),$$
see Sec.\,\ref{se3}. Moreover, in Sec. 4, by establishing a local estimate under small amplitude
we obtain an explicit upper bound for the maximal limit of normalized concentrating sequence (abbreviated as NCS): $1+e^2$.

In Sec. 5 and Sec. 6, we further study positive and negative perturbations of the critical functional, respectively. For the modified functional
$$I^c(v):=\int_0^1 e^{\left(\log\frac{e}{s}+\log\log\frac{e}{s}+c s\right)\frac{v^2(s)}{s}}ds,
$$
we prove
\begin{theorem}\label{Th3} (i) There exists a positive constant $c^*$ such that
for any $c>c^*$,
the supremum $ \sup\limits_{u\in E}I^c(u)$ is  attainable. In fact, we can find an explicit (but not optimal) constant $$c^*=4\log\bigl(2(1+e^2)\bigr)$$
 such that the attainability  holds.  \\
(ii)
There exists a positive constant $c_*$ such that
for any $c<-c_*$,
the supremum $\sup\limits_{u\in E}I^c(u)$ is  unattainable.  In fact, we can find an explicit (but not optimal) constant $$c_*=(5+\log 5)e^4$$
 such that the unattainability  holds.
\end{theorem}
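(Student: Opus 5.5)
Both parts rest on a concentration--compactness dichotomy for $I^c$, which I would set up first. Take a maximizing sequence $u_j\in E$ with $u_j\rightharpoonup u$ weakly in $H^1(0,1)$. Since $cs\,u^2/s=c\,u^2$ is bounded by $|c|\|u\|_\infty^2\le|c|$, the perturbation is a bounded factor. Adapting the argument of Theorem~\ref{Th2}, combined with the local small-amplitude estimate of Section 4, gives the following alternative. Either $u_j$ is a normalized concentrating sequence, meaning $u_j\rightharpoonup0$ with energy concentrating at $0$. Or $I^c(u_j)\to I^c(u)$, in which case $u$ attains the supremum; this uses a standard Lions-type improvement showing that $\|u'\|_2<1$ forces compactness. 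Along an NCS we have $\|u_j\|_\infty\to0$, so $e^{cu_j^2}\to1$ uniformly and the perturbation disappears in the limit. Hence the concentration level of $I^c$ equals that of $I$, which by Section 4 is at most $1+e^2$, independently of $c$. So a maximizing sequence can fail to converge only if $\sup_E I^c\le 1+e^2$.

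For (i) it therefore suffices to exhibit a single test function $v\in E$ with $I^c(v)>1+e^2$. I would take a fixed nonconcentrated profile with $v(1)\neq0$, for instance $v(s)=s$, which has $\int_0^1|v'|^2=1$. Near $s=1$ the exponent is $(\log\frac es+\log\log\frac es+cs)\,s$. On $[1/2,1]$ it is bounded below by roughly $cs\ge c/2$ up to bounded terms. This gives $I^c(v)\ge \tfrac12 e^{c/4}$ once the logarithmic terms are accounted for; the factor $c/4$ comes from choosing the interval $[1/2,1]$ with a little slack. The condition $\tfrac12e^{c/4}>1+e^2$ is exactly $c>4\log(2(1+e^2))=c^*$. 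The dichotomy then yields a maximizer. The delicate step in (i) is checking that the limit of a maximizing sequence stays in $E$ and is not merely in the unit ball. Since the integrand increases with $u^2$, I would rescale $u/\|u'\|_2$ to increase the functional.

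For (ii), with $c<-c_*$, I would argue by contradiction: suppose $u\in E$ attains $S_c:=\sup_E I^c$. First, $S_c\ge\lim_j I^c(m_j)=1+2e$ by the Section 3 computation together with the vanishing of the perturbation. Next I would bound $I^c(u)$ from above directly. The term $e^{cu^2}$ strongly damps any $u$ that is not small, because $u^2(s)\le s$, while for very negative $c$ the log terms cannot compensate. Splitting $(0,1)$ at a scale $\delta$ (say $\delta\sim e^{-4}$), on $(0,\delta)$ I would use the $\gamma=1$ bound with the factor $e^{cu^2}\le1$. On $(\delta,1)$, the extremality and the Euler--Lagrange equation give $u$ a definite size comparable to $\max u^2$, and there the factor $e^{cu^2}$ with $|c|>(5+\log5)e^4$ forces the integrand to be $\le1+$small. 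The target is $I^c(u)<1+2e$, contradicting $S_c\ge 1+2e$. The main obstacle is this step: controlling an arbitrary maximizer quantitatively with no concentration assumption. I would try the Euler--Lagrange equation plus monotone rearrangement to reduce to increasing $u$, and then use $u^2(s)\le s$ on each piece to obtain the explicit constant.
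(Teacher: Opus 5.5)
Part (i) is correct and follows the paper's argument. Along an NCS, $\|u_j\|_\infty\to0$, so $L_{\mathrm{conc}}(c)=1+e^2$. The test function $v(s)=s$ exceeds this level. A nonzero weak limit of a maximizing sequence gives convergence as in Proposition~\ref{pro2.2}, and since $\beta+c>0$, rescaling by $\|U'\|_2$ puts the limit in $E$. Your justification of $I^c(v)\ge\frac12e^{c/4}$ is garbled, though. The exponent is $(\beta(s)+c)s^2\ge cs^2\ge c/4$ on $[1/2,1]$ because $\beta\ge0$; no slack is involved.

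Part (ii) has a genuine gap, beginning with the comparison level. For every fixed $c$, the sequence $v_R$ of Section~\ref{scl} has $\|v_R\|_\infty\to0$, hence $I^c(v_R)\to1+e^2$. So $S_c\ge1+e^2>1+2e$, and for every $c$, however negative, some functions in $E$ have $I^c>1+2e$. A bound built only from properties shared by all of $E$ therefore cannot give $I^c(u)<1+2e$, and your ingredients are all of that kind:
\begin{itemize}
\item On $(\delta,1)$ with $\delta=e^{-4}$ and $-c\ge\beta(e^{-4})=(5+\log5)e^4$, you simply have $e^{(\beta+c)u^2}\le1$; the Euler--Lagrange equation adds nothing there.
\item On $(0,\delta)$, the $\gamma=1$ inequality only bounds $\int_0^\delta e^{\beta u^2}$ by $\sup_E I^0$. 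This is an unquantified constant, itself at least $1+e^2$.
\end{itemize}
The neighbourhood of $0$ is exactly where the excess mass $e^2$ sits, and nothing in your plan controls it. The appeal to ``Euler--Lagrange plus rearrangement'' is too vague to close this.

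The missing idea is to compare with the true level. You need $\sup_E I^{-K}=1+e^2$, approached along $v_R$, together with $I^{-K}(u)<1+e^2$ for each fixed $u\in E$. The decisive input is the sharp small-amplitude estimate, Lemma~\ref{taglm3.1}:
\begin{itemize}
\item If $\int_0^{\delta_0}|u'|^2>1-\delta_0$ with $\delta_0=m_0^2/4$, then $\|u\|_\infty\le m_0$. Lemma~\ref{taglm3.1} then gives $I^0(u)\le1+e^2+A_{1/2}(u)$, where $A_{1/2}(u)=\int_{1/2}^1(e^{\beta u^2}-1)$.
\item For $K\ge\beta(1/2)$, the loss $D_K(u)=I^0(u)-I^{-K}(u)$ dominates $A_{1/2}(u)$ pointwise on $[1/2,1]$. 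It is also strictly positive on $(0,1/2)$, since $u\not\equiv0$ there.
\item Hence $I^{-K}(u)<1+e^2$.
\end{itemize}
The complementary non-concentrated case is treated in Lemma~\ref{20260908-lm0} by subcritical integrability near the origin. That half needs more care than it receives there. The condition $\int_0^{\delta_0}|u'|^2\le1-\delta_0$ gives $u^2/s\le1-\delta_0$ only on $(0,\delta_0)$, not on all of $(0,e^{-4})$, where $\beta$ may exceed $K$.
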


The above theorem shows that the attainability and non-attainability
phenomena occur in two different regimes of the perturbation parameter.
More precisely, sufficiently positive perturbations raise the variational
level above the concentration threshold, which leads to the existence of
an extremal function, whereas sufficiently negative perturbations preserve
the concentration level and prevent the attainment of the supremum.

It is natural to ask whether there exists a sharp transition value
separating these two regimes. Since the functional $I^c$ is monotone with
respect to the parameter $c$, namely,
\[
c_1<c_2
\quad\Longrightarrow\quad
I^{c_1}(u)\le I^{c_2}(u),
\qquad u\in E,
\]
the attainability property is also monotone in the parameter. Therefore,
the two regimes cannot alternate, and there exists a unique threshold
parameter separating attainability from non-attainability.
This leads to the following result in Sec. 7.

\begin{theorem}\label{Th4}[Existence of a sharp attainability threshold]
\label{critical-threshold}
There exists a finite constant $$c_0\in (-c_*, c^*)=\left(-(5+\log 5)e^4,~4\log(2(1+e^2))\right)$$ such that
\[
\sup_{u\in E} I^c(u)
\]
is attained if $c>c_0$ and is not attained if $c<c_0$.
In particular, the parameter $c_0$ separates the attainability and
non-attainability regimes.
\end{theorem}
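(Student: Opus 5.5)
The plan is to define the threshold as an infimum over the attainability set and then use the monotonicity of $I^c$ in $c$. Set
\[
\mathcal A:=\Bigl\{c\in\mathbb R:\ \sup_{u\in E}I^c(u)\ \text{is attained}\Bigr\},\qquad c_0:=\inf\mathcal A .
\]
By Theorem \ref{Th3}(i), $(c^*,\infty)\subset\mathcal A$, so $\mathcal A\neq\emptyset$ and $c_0\le c^*$. By Theorem \ref{Th3}(ii), $\mathcal A\cap(-\infty,-c_*)=\emptyset$, so $c_0\ge -c_*$ and $c_0$ is finite.

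The key step is to show that $\mathcal A$ is an up-set: if $c_1\in\mathcal A$ and $c_2>c_1$, then $c_2\in\mathcal A$. Monotonicity of $I^c$ alone does not give this, because an extremal $u_1$ for $I^{c_1}$ need not be extremal for $I^{c_2}$. I would argue through the concentration dichotomy of Sections 4--5. Let $S(c):=\sup_E I^c$. Normalized concentrating sequences satisfy $u_j\to0$ uniformly, with the mass of $|u_j'|^2$ escaping to $s=0$. Along such sequences the perturbation $cs\cdot u_j^2/s=c\,u_j^2$ tends to $0$ uniformly, so the NCS level equals $1+e^2$ independently of $c$.

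By the argument of Section 5, $S(c)$ is attained whenever $S(c)>1+e^2$. I would then show the following: if $S(c_1)$ is attained by some $u_1\neq0$, then $I^{c_2}(u_1)>I^{c_1}(u_1)$ strictly for $c_2>c_1$. This holds because $u_1^2>0$ on a set of positive measure and the exponent is strictly increasing in $c$ there. Hence $S(c_2)>S(c_1)\ge 1+e^2$. The last inequality holds because the NCS level is always a lower bound for the supremum, since it is a limsup along elements of $E$. Therefore $S(c_2)>1+e^2$, and the compactness criterion gives $c_2\in\mathcal A$.

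Once the up-set property is established, $\mathcal A\supset(c_0,\infty)$ and $\mathcal A\cap(-\infty,c_0)=\emptyset$, which is exactly the claimed dichotomy.

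For the strict inclusion $c_0\in(-c_*,c^*)$, I would use continuity and openness. First, $S(c)$ is continuous and nondecreasing in $c$: since $u^2(s)\le s$ on $E$ by Cauchy--Schwarz, we have
\[
e^{-|c-c'|}I^{c'}(u)\le I^c(u)\le e^{|c-c'|}I^{c'}(u).
\]
The condition $S(c)>1+e^2$ is therefore open, so if $c^*\in\mathcal A$ through that condition, then $c_0<c^*$. At the lower end, the proof of Theorem \ref{Th3}(ii) presumably shows $S(c)=1+e^2$ with a strict inequality $I^c(u)<1+e^2$ for all $u$. I would revisit that estimate to check that it has slack, so that it persists slightly above $-c_*$. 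If it does not, I would settle for $c_0\in[-c_*,c^*]$.

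The main obstacle is the up-set step. One must confirm that the Section 5 existence argument really applies whenever $S(c)>1+e^2$ (with the NCS bound $1+e^2$ valid uniformly in $c$), and that the sharp value $1+e^2$ is indeed attained as a lower bound for every $c$. If the sharp NCS level is only an upper bound, I would instead replace $1+e^2$ by the $c$-independent quantity $\Lambda:=\sup_{\text{NCS}}\limsup_j I^c(u_j)$, and run the same argument with $\Lambda$ in place of $1+e^2$.
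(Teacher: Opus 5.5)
Your argument is the paper's. Section 7 also sets $c_0=\inf\mathcal A$ and gets the up-set property the same way. It takes an extremal $u_1$ for $c_0$ (if $c_0\in\mathcal A$) or for a nearby $c_\delta\in\mathcal A$, then uses $I^c(u_1)>I^{c_1}(u_1)\ge 1+e^2$ together with the criterion ``$\sup_E I^c>1+e^2\Rightarrow$ attained''. Replacing $1+e^2$ by the $c$-independent level $\Lambda$ is a harmless robustification.

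\textbf{The open interval.} The paper's proof only yields $c_0\in\mathbb R$, which with Theorem \ref{Th3} gives $c_0\in[-c_*,c^*]$. So your fallback is exactly what the paper actually proves.

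Your upper-end improvement can be made concrete. For $v(t)=t$ and $c>0$ one has $I^c(v)>\frac12+\frac12e^{c/4}$, and this is $\ge 1+e^2$ once $c\ge 4\log(1+2e^2)$. Hence $c_0\le 4\log(1+2e^2)<c^*$.

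The strict inequality $c_0>-c_*$ is proved neither by you nor by the paper. Do not expect to find slack in Section 6. The bound $\int_0^{e^{-4}}G_\delta\,ds\le\frac12$ in Lemma \ref{20260908-lm0}, with $G_\delta=(\frac es\log\frac es)^{1-\delta}$, equals $e\int_5^\infty t^{1-\delta}e^{-\delta t}\,dt$. This blows up as $\delta\downarrow0$, so it fails for the small $\delta_0=m_0^2/4$ that the argument needs.

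\textbf{The criterion you flag as the main obstacle.} The paper applies it for every $c>c_0$. However, its Section 5 proof that no energy is lost uses $\beta+c>0$, and this fails near $t=1$ when $c\le-1$, since $\beta(1)=1$. The criterion still holds for every $c$, by a different argument:
\begin{itemize}
\item The supremum of $I^c$ over the ball $\{u(0)=0,\ \|u'\|_2\le1\}$ equals $\sup_E I^c$. To see this, approximate any $w\ne0$ in the ball by $w+\psi_k\in E$, with $\psi_k=\mu_k k^{-1}\sin(k\pi t)\rightharpoonup0$, and apply the $I^c$-analogue of Proposition \ref{pro2.2}.
\item So if the weak limit $U\ne0$ had $\|U'\|_2<1$, it would be an interior maximizer.
\item Testing with $\phi\in C_c^\infty(0,1)$ then gives $(\beta+c)Ue^{(\beta+c)U^2}=0$ a.e.
\item Since $\beta$ is strictly decreasing, $\beta+c$ vanishes at most at one point, which forces $U\equiv0$, a contradiction.
\end{itemize}
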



 In the future, we will also investigate the exact value of $c_0$ and the open problem on the attainability of extremal functions for the original critical functional $I$.

The novelty of our results lies in three aspects.

First, we develop a concentration-compactness alternative principle tailored to this inequalities of Bliss-Moser type,
and solve the
compactness problem left open in \cite{DRU}.

Second, we
determine the sharp concentration level at the critical logarithmic
coefficient, rather than merely proving the existence of
concentrating sequences.

Third, we connect this sharp concentration
level with the attainability and non-attainability of perturbed
variational problems. The analysis also reveals that the critical
loss of compactness is governed by a small-amplitude sequence whose
energy remains asymptotically equal to one while its derivative
develops an infinitesimal shock. This provides a precise variational
description of a concentration mechanism that is distinct from the
classical Moser concentration.

{\bf A key ingredient in the determination of the sharp concentration level
is local estimate for small-amplitude functions.} Because the weight
\[
\beta(t)=\frac{\log(e/t)+\log\log(e/t)}{t}
\]
is singular at the origin, the condition $\|u\|_{\infty}\ll1$ does not
lead to a straightforward pointwise control of the exponential term.
We overcome this difficulty by using logarithmic variables, a
three-region decomposition of the singular interval, and a refined
one-dimensional energy estimate. This yields the sharp bound
\[
\int_0^r\left(e^{\beta(t)u(t)^2}-1\right)\,dt\le e^2
\]
for all sufficiently small-amplitude functions, see Sec. 4 for the full proof. The importance of this
estimate is that it identifies $e^2$ as the maximal excess mass that can
be generated without a genuine high-amplitude bubble. Combined with the
concentration decomposition, it gives the upper bound
$L_{\mathrm{conc}}\le 1+e^2$; a matching concentrating sequence then
proves
\[
L_{\mathrm{conc}}=1+e^2.
\]
Thus, the lemma is not merely a boundedness estimate: it provides the
sharp local mechanism that converts the singular one-dimensional
geometry into the exact concentration constant $e^2$.

Very recently, upon completing the revised version of this study, we notice that these are some significant progress in the fields of extremals for Trudinger-Moser inequality.
Chen et al. \cite{Chen1} found the $L^p$-perturbation can also affect the existence of extremals for
the Trudinger-Moser inequality in two dimensional domain and furthermore quantified the effect of sharp $L^p$-perturbation. Besides, the uniqueness problem of extremal
functions for Trudinger-Moser inequality in unit disk has been open for a long time.
Chen et al. \cite{Chen2} established the uniqueness for local Trudinger-Moser equation and it is an important step towards solving the uniqueness of extremal function for Trudinger-Moser inequality
in disk.

The recent paper by Chen et al. \cite{Chen1} studied the
attainability of
\[
\sup_{\substack{u\in H_0^1(\Omega)\\ \|\nabla u\|_2\leq 1}}
\int_\Omega \left(e^{4\pi u^2}-\lambda |u|^p\right)\,dx
\]
on bounded planar domains. They established sharp threshold results
for the additive $L^p$-perturbation of the classical two-dimensional
Trudinger--Moser functional. Although our work is motivated by the same
general question, the two problems are mathematically distinct. We
consider instead the one-dimensional singular functional
\[
I^c(u)=\int_0^1 e^{(\beta(t)+c)u(t)^2}\,dt,
\qquad
\beta(t)=\frac{\log(e/t)+\log\log(e/t)}{t},
\]
on $E$.
Thus, their perturbation is an additive $L^p$ term in a planar problem,
whereas ours is an exponential perturbation of a singular
one-dimensional weight. Our analysis is based on one-dimensional
localization and a local estimate under small amplitude, and yields the sharp concentration
level $L_{\mathrm{conc}}=1+e^2$, and attainability/non-attainability results for exponential
perturbations. The two works therefore address complementary
perturbation phenomena in different Trudinger--Moser settings.

\section{The proof of Theorem \ref{Th2}}
\setcounter{equation}{0}
\noindent
In this section, we consider the subcritical case: $\gamma\in[0,1)$ and prove that the subcritical functionals $I_\gamma$ are compact.
Denote
$$\beta_\gamma(t)=\frac{\log\frac{e}{t}+\gamma\log\log\frac{e}{t}}{t}.$$
Firstly, by \cite{DRU}, we have the following Bliss-Moser type inequality: for every $\alpha\leq1$ there is a constant $C_\alpha$ such
that
\[
 \int_0^1 \exp\!\bigl(\alpha\beta_\gamma(t)z^2(t)\bigr)\,dt
 \leq C_\alpha
 \qquad\text{whenever }z\in H^1(0,1),~z(0)=0,~
 \int_0^1|z'(t)|^2\,dt\leq1
 \tag{BM}.
\]
We use (BM) to prove the concentration-compactness alternative.
\begin{proposition}\label{pro2.2}
 Let $u_j\in E,~u_j\rightharpoonup u$. Then either
$\{u_j\}$ is a NCS
or $u\neq0$ and
$I_\gamma(u_j)\to I_\gamma(u)$.
\end{proposition}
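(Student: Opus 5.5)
We would read NCS in the usual sense, as in \cite{DRU}: $u_j\in E$, $u_j\rightharpoonup 0$, and for every $\delta>0$, $\int_\delta^1|u_j'|^2\,dt\to 0$, so the Dirichlet energy concentrates at the origin. The proof splits according to whether the weak limit is zero.

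\textbf{Case $u=0$.} If $u_j$ is not an NCS, then along a subsequence there are $\delta>0$ and $\eta>0$ with $\int_\delta^1|u_j'|^2\ge\eta$. Hence $\int_0^\delta|u_j'|^2\le 1-\eta$. We then control $I_\gamma(u_j)$ separately on $(0,\delta)$ and on $(\delta,1)$.
\begin{itemize}
\item On $(\delta,1)$, the weight $\beta_\gamma$ is bounded. Moreover $u_j\to0$ uniformly there, by compact embedding into $C[0,1]$. So $\int_\delta^1 e^{\beta_\gamma u_j^2}\to 1-\delta$.
\item On $(0,\delta)$, define $z_j=u_j/\sqrt{1-\eta}$ on $[0,\delta]$ and extend it constantly to $(\delta,1]$. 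Then $\int|z_j'|^2\le1$. Also $\beta_\gamma u_j^2=(1-\eta)\beta_\gamma z_j^2$ on $(0,\delta)$, so (BM) with exponent $p(1-\eta)\le1$ gives $e^{\beta_\gamma u_j^2}$ bounded in $L^p(0,\delta)$ for $p=1/(1-\eta)>1$.
\item Pointwise, $e^{\beta_\gamma u_j^2}\to1$ on $(0,1)$. Uniform $L^p$ bounds with $p>1$ give uniform integrability, so Vitali yields $\int_0^\delta e^{\beta_\gamma u_j^2}\to\delta$.
\end{itemize}
Together these give $I_\gamma(u_j)\to1=I_\gamma(0)$. Since the alternative is to be read as a statement about the sequence (up to subsequence), we would check which reading the later text uses. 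If "not NCS" with $u=0$ must be excluded, the statement should be interpreted as saying that compactness holds in this case. We would likely note that this case also yields convergence.

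\textbf{Case $u\ne0$.} This is the Lions-type improvement. Write $u_j=u+v_j$ with $v_j\rightharpoonup0$. Then $\int|v_j'|^2\to 1-\|u'\|_2^2=:1-\theta$ with $\theta>0$. Use $u_j^2\le (1+\varepsilon)v_j^2+(1+1/\varepsilon)u^2$, and choose $\varepsilon$ small so that $(1+\varepsilon)(1-\theta)<1$. Hölder then gives
\[
\int_0^1 e^{p\beta_\gamma u_j^2}\le \Big(\int e^{pq(1+\varepsilon)\beta_\gamma v_j^2}\Big)^{1/q}\Big(\int e^{pq'(1+1/\varepsilon)\beta_\gamma u^2}\Big)^{1/q'}.
\]
We take $p,q>1$ close to $1$ so that $pq(1+\varepsilon)\|v_j'\|_2^2\le1$ for large $j$. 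The first factor is then bounded by (BM) after normalizing $v_j$.

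The main obstacle is the second factor: we need $e^{K\beta_\gamma u^2}\in L^1$ for arbitrary $K$ and a fixed $u\in H^1$ with $u(0)=0$. We would obtain this from the improved local estimate $|u(t)|\le \sqrt t\,\|u'\|_{L^2(0,t)}$ with $\|u'\|_{L^2(0,t)}\to0$. This gives $K\beta_\gamma u^2\le K o(1)(\log(e/t)+\gamma\log\log(e/t))\le \tfrac12\log(e/t)$ near $0$, which is integrable. Hence $e^{\beta_\gamma u_j^2}$ is bounded in $L^p$, and since $u_j\to u$ pointwise, Vitali gives $I_\gamma(u_j)\to I_\gamma(u)$.
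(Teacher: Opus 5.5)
Your argument is correct and, in the substantive case $u\neq0$, it is the paper's proof. You split off the weak limit, use $\int_0^1|(u_j-u)'|^2\,dt\to1-\theta<1$, control the remainder by H\"older and (BM), show $e^{K\beta_\gamma u^2}\in L^1(0,1)$ for the fixed limit and every $K$, and conclude by uniform integrability; your Vitali step plays the role of the paper's bound $\sup_j\int_0^\delta e^{\beta_\gamma u_j^2}\,dt\le C\delta^{1-1/q}$.

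The differences are minor. You get $e^{K\beta_\gamma u^2}\in L^1$ directly from $u(t)^2\le t\int_0^t|u'|^2\,ds$ with vanishing local energy, whereas the paper uses the truncation $u=v+w$ with $v\in C_c^\infty(0,1)$. Your version is cleaner, and it avoids a small slip in the paper: if $u(1)\neq0$ then $\int_0^1|w'|^2\,dt\ge u(1)^2$, so $v$ should only be required to vanish near $0$. Your treatment of the case $u=0$ is correct but superfluous, because in the paper an NCS is just a sequence in $E$ converging weakly to $0$.
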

\begin{remark}
    The proof of this proposition does not require $\gamma<1$, so it also holds for the case where $\gamma = 1$. Due to the strong singularity of $\beta_\gamma(t)$ at the point $0$, this proposition cannot be directly proved using the traditional Lions' method \cite[Theorem I.6]{Lions}.
\end{remark}
\begin{proof}
If $u \equiv0$, then $\{u_j\}$ is a normalized concentrating sequence.

Now assume $u\not\equiv 0$. Set
$$\theta=\int_0^1|u'(t)|^2dt>0.$$
By weak convergence and the Hilbert-space norm identity
$$\int_0^1|u'_j-u'|^2dt=1-\theta+o(1).$$
Thus the remainder has strictly subcritical energy since $1- \theta < 1$.

Away from $t = 0$, compactness gives strong convergence. For every $\delta>0$,
$$u_j \to u~strongly~in~C([\delta, 1]).$$
Hence
\begin{equation}\label{20260912-a1}\int_\delta^1 e^{\beta_\gamma(t) u_j^2}dt\to\int_\delta^1 e^{\beta_\gamma(t) u^2}dt.\end{equation}

It remains to control the singular endpoint.
 We first prove that there exists $q>1$ such that
\begin{equation}\label{260912-tag1}
 \sup_j\int_0^1 \exp\!\bigl(q\beta_\gamma(t)u_j^2(t)\bigr)\,dt<\infty.
\end{equation}

We will prove the uniform integrability estimate \eqref{260912-tag1} by using a smooth truncation of the weak
limit in the following steps.

\paragraph{Step 1.}
Choose $\eta>0$ so that $1-\theta+\eta<1$. Then, for all sufficiently
large $j$,
\begin{equation}\label{20260912-tag5}
 \int_0^1|(u_j-u)'|^2\,dt\leq1-\theta+\eta.
\end{equation}

Choose $\varepsilon>0$ such that
\begin{equation}\label{20260912-tag6}
 (1+\varepsilon)(1-\theta)<1.
\end{equation}
Next choose $p>1$ and $q>1$ sufficiently close to $1$, and then reduce
$\eta$ if necessary, so that
\begin{equation}\label{20260912-tag7}
 pq(1+\varepsilon)(1-\theta+\eta)<1.
 \end{equation}
This is possible because of the strict inequality in \eqref{20260912-tag6}.

\paragraph{Step 2: Smooth truncation of the weak limit.}
Choose $v\in C_c^\infty(0,1)$ such that
\begin{equation}\label{20260912-tag8}
 w:=u-v,
 \qquad
 \int_0^1|w'(t)|^2\,dt<\delta,
\end{equation}
where $\delta>0$ will be chosen below. Since $v$ has compact
support in $(0,1)$, there exists $a>0$ such that
\begin{equation}\label{20260912-tag9}
 v(t)=0\qquad(0<t<a).
\end{equation}

Put $x_j:=u_j-u$. Then
\[
 u_j=x_j+w+v.
\]
Using twice the elementary inequality
\[
 (r+s)^2\leq(1+\varepsilon)r^2
 +\left(1+\frac1\varepsilon\right)s^2,
\]
and, for simplicity, $(w+v)^2\leq2w^2+2v^2$, we obtain
\[
 u_j^2\leq(1+\varepsilon)x_j^2+D w^2+D v^2,
 \qquad
 D:=2\left(1+\frac1\varepsilon\right).
\]
Therefore,
\begin{equation}\label{20260912-tag12}
 e^{q\beta_\gamma u_j^2}
 \leq e^{q(1+\varepsilon)\beta_\gamma x_j^2}
       e^{qD\beta_\gamma w^2}
       e^{qD\beta_\gamma v^2}.
\end{equation}

\paragraph{Step 3: The remainder $x_j=u_j-u$.}
By \eqref{20260912-tag5}, define
\[
 z_j:=\frac{x_j}{\sqrt{1-\theta+\eta}}.
\]
Then $\int_0^1|z_j'|^2\,dt\leq1$, and \eqref{20260912-tag7} gives
\[
 pq(1+\varepsilon)\beta_\gamma x_j^2
 =pq(1+\varepsilon)(1-\theta+\eta)
   \beta_\gamma z_j^2<\beta_\gamma z_j^2.
\]
Hence (BM) implies
\begin{equation}\label{20260912-tag13}
 \sup_j\int_0^1
 e^{pq(1+\varepsilon)\beta_\gamma(t)x_j^2(t)}\,dt<\infty.
\end{equation}

\paragraph{Step 4: The small-energy remainder $w$.}
Since $w(0)=0$, Cauchy--Schwarz gives
\[
 |w(t)|^2
 =\left|\int_0^t w'(s)\,ds\right|^2
 \leq t\int_0^t|w'(s)|^2\,ds
 \leq\delta t.
\]
Consequently,
\[
 \beta_\gamma(t)w^2(t)
 \leq\delta\left(\log\frac et
 +\gamma\log\log\frac et\right).
\]
Choose the approximation in \eqref{20260912-tag8} so close that
\begin{equation}\label{20260912-tag16}
 p'qD\delta<1,
\end{equation}
where $p'=p/(p-1)$. Then
\[
 e^{p'qD\beta_\gamma(t)w^2(t)}
 \leq
 \left(\frac et\right)^{p'qD\delta}
 \left(\log\frac et\right)^{\gamma p'qD\delta}.
\]
The right-hand side is integrable near $t=0$. Indeed, with
$s=\log(e/t)$,
\[
\begin{aligned}
 \int_0^\delta t^{-a}\left(\log\frac et\right)^b\,dt
 &=e^{1-a}\int_{\log(e/\delta)}^\infty e^{-(1-a)s}s^b\,ds<\infty
 \qquad(a<1).
\end{aligned}
\]
Thus \eqref{20260912-tag16} implies
\begin{equation}\label{20260912-tag19}\int_0^1e^{p'qD\beta_\gamma(t)w^2(t)}\,dt<\infty.
\end{equation}

\paragraph{Step 5: The smooth part $v$.}
By \eqref{20260912-tag9}, $v=0$ near $0$. On the compact support of $v$, the function
$\beta_\gamma$ is bounded, and $v$ is bounded. Therefore
\begin{equation}\label{20260912-tag20}\sup_{0<t<1}e^{qD\beta_\gamma(t)v^2(t)}<\infty.
\end{equation}

Now, applying H{\"o}lder's inequality to \eqref{20260912-tag12}, and using \eqref{20260912-tag13}, \eqref{20260912-tag19}, and \eqref{20260912-tag20}, gives
\[
\begin{aligned}
 \int_0^1e^{q\beta_\gamma u_j^2}\,dt
 &\leq
 \left\|e^{qD\beta_\gamma v^2}\right\|_{L^\infty}
 \left(\int_0^1e^{pq(1+\varepsilon)\beta_\gamma x_j^2}\,dt\right)^{1/p} \\
 &\quad\times
 \left(\int_0^1e^{p'qD\beta_\gamma w^2}\,dt\right)^{1/p'}.
\end{aligned}
\]
The right-hand side is bounded independently of $j$, which proves \eqref{260912-tag1}.

 From the uniform integrability \eqref{260912-tag1} and H{\"o}lder's inequality, for every $\delta>0$,
\[
\int_0^\delta e^{\beta_\gamma(t)u_j^2(t)}\,dt
\leq \delta^{1-1/q}
 \left(\int_0^1 e^{q\beta_\gamma(t)u_j^2(t)}\,dt\right)^{1/q}
\leq C\delta^{1-1/q},
\]
where $C$ is independent of $j$. Hence
\[
 \sup_j\int_0^\delta e^{\beta_\gamma(t)u_j^2(t)}\,dt
 \longrightarrow0\qquad(\delta\downarrow0).
\]

Combined with \eqref{20260912-a1}, this gives
$$I_\gamma(u_j)\to I_\gamma(u).$$

\end{proof}

To prove Theorem \ref{Th2}, by Proposition \ref{pro2.2}, we only need to prove that $I_\gamma$ ($0\leq\gamma<1$) is compact for the  concentrating sequences: that is:
\begin{lemma}\label{lm1}
 $$u_j\in E,~u_j\rightharpoonup 0\Rightarrow  I_\gamma (u_j) \to I_\gamma (0) = 1.$$
\end{lemma}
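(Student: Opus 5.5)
The plan is to split $(0,1)$ into a region away from the origin, where weak convergence gives uniform convergence, and a singular region near $0$, which is analysed in logarithmic variables. The integrand should be small there because, for $\gamma<1$, the factor $(\log\frac et)^{\gamma}$ loses against the very short length of the set where $u_j^2/t$ is close to $1$.

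\textbf{Step 1 (away from the origin).} Since $u_j\rightharpoonup0$ in $H^1(0,1)$, the compact embedding gives $\mu_j:=\|u_j\|_{L^\infty(0,1)}^2\to0$. Hence for every fixed $\delta>0$,
\[
\int_\delta^1 e^{\beta_\gamma u_j^2}\,dt\to 1-\delta .
\]
It therefore suffices to show that
\[
\limsup_{j}\int_0^\delta\bigl(e^{\beta_\gamma u_j^2}-1\bigr)\,dt\to0 \qquad (\delta\downarrow0).
\]

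\textbf{Step 2 (logarithmic variables).} Put $t=e^{1-s}$ with $s\in[1,\infty)$, and $w_j(s)=u_j(t)/\sqrt t$. The Cauchy--Schwarz bound $u_j(t)^2\le t\int_0^t|u_j'|^2$ gives $w_j^2\le1$. A direct computation gives $|u_j'|^2dt=(w_j'-\tfrac12 w_j)^2ds$. The integral becomes
\[
\int_0^\delta e^{\beta_\gamma u_j^2}\,dt= e\int_{s_\delta}^\infty s^{\gamma w_j^2}\,e^{-s(1-w_j^2)}\,ds,\qquad s_\delta=\log(e/\delta).
\]
Fix $\lambda\in(0,1)$. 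On the set $\{w_j^2\le1-\lambda\}$ the integrand is at most $s^\gamma e^{-\lambda s}$. Its integral over $[s_\delta,\infty)$ tends to $0$ as $\delta\to0$, uniformly in $j$.

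\textbf{Step 3 (localizing the bad set).} On the set $\{w_j^2>1-\lambda\}$ we have $u_j^2(t)/t>1-\lambda$. Since $u_j^2\le\mu_j$, this forces $t\le\mu_j/(1-\lambda)$, that is, $s\ge S_j:=\log\frac{e(1-\lambda)}{\mu_j}\to\infty$. Moreover $w_j^2\le\mu_j e^{s-1}$, so $w_j^2$ is exponentially small for $s\le S_j-K$. Thus only a window $s\ge S_j-O(1)$ matters, and on it the weight is comparable to $S_j^\gamma$ (it is at most $s^\gamma$ and the exponential factor decays past the window).

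\textbf{Step 4 (the main obstacle).} The key quantitative estimate I would aim for is that the bad set is very short in the $s$-variable. Precisely, for every $M>0$,
\[
\bigl|\{s\ge S_j-K:\ s(1-w_j^2(s))\le M\}\bigr|\le \frac{C(M,K)}{S_j}.
\]
Similarly, for $s(1-w_j^2)\in[M,2M]$ etc., I would use dyadic layers with an extra decay factor $e^{-M}$ so that the layers can be summed. Granting this, the integral over the bad set is bounded by
\[
\sum_{M} e^{-M} S_j^{\gamma}\frac{C(M)}{S_j}\lesssim S_j^{\gamma-1}\to0,
\]
precisely because $\gamma<1$. This matches the Moser sequence, where the same computation yields the finite limit $e$ when $\gamma=1$.

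To prove the length estimate I would use the energy identity from Step 2 in the sharpened form
\[
1-w_j^2(s)\ \ge\ \int_s^{\infty}\bigl(w_j'-\tfrac12 w_j\bigr)^2d\sigma \;-\;(\text{boundary terms}),
\]
which comes from $u_j(t)^2\le t\bigl(1-\int_t^1|u_j'|^2\bigr)$. I would combine it with the lower bound
\[
\int_t^\tau|u_j'|^2\ge\frac{(u_j(t)-u_j(\tau))^2}{\tau-t},
\]
choosing $\tau=te^{k}$ so that $w_j(\tau)^2\le\frac14 w_j(t)^2$. This choice is possible with $k=O(1)$ because of the decay from Step 3, and it forces $1-w_j^2\gtrsim e^{-k}$ except on a set of length $O(1)$ at scale $s\approx S_j$. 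I expect that refining this $O(1)$ to $O(1/S_j)$, by exploiting that $s(1-w_j^2)\le M$ requires $1-w_j^2\le M/S_j$ and that the energy $\int(w_j'-w_j/2)^2$ cannot keep $w_j^2$ within $M/S_j$ of $1$ for more than an $s$-length of order $M/S_j$, will be the hardest step.

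Combining Steps 1--4 and letting $j\to\infty$, then $\delta\to0$, then $\lambda\to0$, yields $I_\gamma(u_j)\to1=I_\gamma(0)$.
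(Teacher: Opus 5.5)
\textbf{There is a genuine gap in Steps 3--4.} Your Steps 1--2 are correct. The mechanism you aim for in Step 4 is also the right one: a weight $s^\gamma$ times an $s$-length of order $1/s$ is what lies behind the paper's bound $c(\log\frac{2e}{a_j})^{\gamma-1}$. But the length estimate of Step 4 is the whole content of the lemma. It is the only place where $\gamma<1$ is used, and you only grant it.

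Moreover, the localization in Step 3, on which you want to base that estimate, is false. The set $\{w_j^2>1-\lambda\}$ does not lie in a window $S_j\pm O(1)$. It lies near $T_j=\log(e/a_j)$, where $a_j$ maximizes $u_j^2/t$, and $T_j$ can be much larger than $S_j=\log(e(1-\lambda)/\mu_j)$. For instance, let $u_j(t)=\sqrt{1-1/j}\,e^{j/2}t$ on $[0,e^{-j}]$, continued linearly with energy $1/j$ on $[e^{-j},1]$. Then $u_j\in E$, $u_j\rightharpoonup0$ and $\mu_j\approx 1/j$, so $S_j\approx\log j$. But $1-w_j^2=1/j$ at $s=j+1$, where $s(1-w_j^2)\approx 1$ and the weight is $s^\gamma\approx j^\gamma\gg S_j^\gamma$. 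Consequently:
\begin{itemize}
\item Your claim that ``the exponential factor decays past the window'' is unjustified.
\item The sum $\sum_M e^{-M}S_j^\gamma C(M)/S_j$ is not a valid bound. With the true weight and your length bound $C/S_j$, you only get something of order $j^\gamma/\log j$.
\item The decay $w_j^2\le\mu_je^{s-1}$ gives no information near $T_j$, so it cannot justify choosing $\tau=te^k$ with $k=O(1)$.
\end{itemize}
Note also that $u_j(t)^2\le t(1-\int_t^1|u_j'|^2)$ translates to $1-w_j^2(s)\ge\int_1^s(w_j'-\frac12 w_j)^2\,d\sigma$, not to an integral over $[s,\infty)$.

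\textbf{What is missing} is a localization around the maximizer of $u_j^2/t$, together with a width estimate in terms of the deficit. The paper obtains this from Lemma~\ref{Key} and Lemma~\ref{Key3}. It splits into the cases $a_j\ge a_0$ and $a_j\to0$, and in the latter case into the regimes $\delta_j\log\frac1{a_j}\ge d_0$ and $\delta_j\log\frac1{a_j}\to0$. It uses intervals of relative width $c\delta_j$ or $1/\log\frac1{a_j}$, on which $1-u_j^2/s$ grows linearly in $k$.

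In your variables, a two-point estimate is enough. Suppose $1-w_j^2\le\varepsilon$ at $s_1<s_2$, i.e.\ at $t_1>t_2$ with $t_i=e^{1-s_i}$. Then $\int_0^{t_2}|u_j'|^2\ge u_j(t_2)^2/t_2\ge 1-\varepsilon$ forces $\int_{t_2}^{t_1}|u_j'|^2\le\varepsilon$. Using $|u_j(t_2)|\le\sqrt{t_2}$, this gives
\[
\sqrt{(1-\varepsilon)t_1}\le\sqrt{t_2}+\sqrt{\varepsilon(t_1-t_2)}.
\]
For $\varepsilon\le\frac1{18}$ this yields $1-t_2/t_1\le 9\varepsilon$, i.e.\ $s_2-s_1\le 18\varepsilon$. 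Take $\lambda=\frac1{18}$. Then the set $\{1-w_j^2\le\lambda\}$ lies in $[T_j-1,T_j+1]$ with $T_j\ge S_j\to\infty$, and each sublevel set $\{1-w_j^2\le\varepsilon\}$ has length at most $18\varepsilon$. The layer-cake formula then gives
\[
\int_{\{1-w_j^2\le\lambda\}}s^{\gamma}e^{-s(1-w_j^2)}\,ds\le(T_j+1)^{\gamma}\int_0^\infty(T_j-1)e^{-(T_j-1)\varepsilon}\,18\varepsilon\,d\varepsilon=\frac{18(T_j+1)^{\gamma}}{T_j-1}\longrightarrow0 .
\]
Combined with your Step 2 for this fixed $\lambda$, this closes the argument, and it is exactly here that $\gamma<1$ enters.
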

For any $w\in E$ and $s>0$, we see from
\begin{equation}\label{basic}
w(s)=\int_0^{s}w'(t)dt\leq s^{\frac{1}{2}}\left(\int_0^{s}|w'(t)|^2dt\right)^{\frac{1}{2}}
\end{equation}
that
\begin{equation}\label{20240709-e1}
	\frac{w^2(s)}{s}\leq1.
\end{equation}
Let $$\max\limits_{s\in(0,1]}\frac{u_j^2}{s}=1-\delta_j.$$
If $\delta_j\not\to0$, i.e., $$\sup\limits_{j}(1-\delta_j)=1-\delta<1,$$
then it is easy to get the conclusion of Lemma \ref{lm1}. Indeed,
in this case, since
$$e^{(\log\frac{e}{s}+\gamma\log\log\frac{e}{s})(1-\delta)}\in L^1(0,1),$$
we can use dominated convergence theorem to get $I_\gamma (u_j) \to I_\gamma (0)$.
\par
To prove Lemma \ref{lm1}, we only need to consider
the case: $\delta_j\to0$, i.e., $$\lim\limits_{j\to\infty}(1-\delta_j)=1.$$
To prove $I_\gamma (u_j) \to I_\gamma (0)$, we use the idea of Moser \cite{Moser}:
if for a sequence $u_j \in E$ one has $\max\limits_{s\in(0,1)}
\frac{u^2_j}{s}
\to 1,$ then $u_j$ is close to
one of the Moser functions.
\begin{lemma}\label{Key}(\cite[Lemma 4.1]{DRU}) For any $u_j\in E$, let
\begin{equation*}\label{20240706-e1}
		1-\delta_j=\max\limits_{s\in[0,1]}\frac{u_j^2(s)}{s}=\frac{u_j^2(a_j)}{a_j},~~(\delta_j, a_j\in[0,1]~\text{depend~on~} u_j),
	\end{equation*}
	then
\begin{equation*}\label{Close}
\int_0^{a_j}\Big|u'_j(s)-\frac{u_j(a_j)}{a_j}\Big|^2ds+\int_{a_j}^1|u'_j(s)|^2ds\leq\delta_j.
	\end{equation*}
\end{lemma}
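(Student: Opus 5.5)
The statement is Lemma \ref{Key}, which is an energy identity rather than an asymptotic estimate. The plan is to expand the square on $[0,a_j]$ and use only three facts: the normalization $\int_0^1|u_j'|^2=1$, the fundamental theorem of calculus with $u_j(0)=0$, and the definition of $\delta_j$. Fix $j$, write $u=u_j$, $a=a_j$, $\delta=\delta_j$, and set $\lambda=u(a)/a$. If $a=0$ the first integral is empty; the quotient is interpreted as its limit $\limsup_{s\to0}u^2(s)/s$, which is $0$ by \eqref{basic}. In that case $\delta=1$ and the claim reduces to $\int_0^1|u'|^2\le 1$, which is trivial. So I assume $a>0$.

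The key step is to expand
\[
\int_0^a|u'-\lambda|^2\,ds=\int_0^a|u'|^2\,ds-2\lambda\int_0^a u'\,ds+\lambda^2a .
\]
Since $u(0)=0$, we have $\int_0^a u'\,ds=u(a)=\lambda a$. The middle term is therefore $-2\lambda^2a$, and the expression equals $\int_0^a|u'|^2\,ds-\lambda^2 a$. Next compute $\lambda^2a=u(a)^2/a=1-\delta$ by the definition of $a$ as the maximizer. Adding $\int_a^1|u'|^2$ to both sides gives
\[
\int_0^a|u'-\lambda|^2\,ds+\int_a^1|u'|^2\,ds=\int_0^1|u'|^2\,ds-(1-\delta)=1-(1-\delta)=\delta .
\]
This is an equality, which is stronger than the claimed inequality. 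If the class only required $\int|u'|^2\le1$, the same computation would still give $\le\delta$.

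There is no genuine obstacle here. The only points needing care are the following.
\begin{itemize}
\item The maximum of $u^2(s)/s$ on $[0,1]$ must be attained. Continuity on $(0,1]$ holds, and by \eqref{basic} we have $u^2(s)/s\le\int_0^s|u'|^2\to0$ as $s\to0$. So either the supremum is attained at some $a>0$, or $u\equiv0$, which is excluded by the normalization.
\item The cross term must be identified correctly, and this uses $u(0)=0$.
\end{itemize}

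The interpretation of the lemma is what is used afterwards in the proof of Lemma \ref{lm1}. When $\delta_j\to0$, $u_j$ is $H^1$-close to the Moser-type function $\lambda_j\min(s,a_j)$. Weak convergence to $0$ then forces $a_j\to0$, since otherwise the limit would be the nonzero function $\min(s,a)/\sqrt a$.
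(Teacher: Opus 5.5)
Your proof is correct and is essentially the argument behind this result. The paper does not prove this lemma itself; it only quotes Lemma 4.1 of do \'{O}--Ruf--Ubilla. However, in Step 1 of the proof of Lemma~\ref{taglm3.1} it uses exactly your computation, in the form of the orthogonal splitting $u=\frac{u(a)}{a}\min\{s,a\}+v$ with $v(0)=v(a)=0$, which gives $\int_0^1|v'|^2\,ds=\Delta$, i.e.\ equality rather than merely the inequality.
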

\begin{lemma}\label{Key3}(\cite[Proposition 4.5]{DRU})
	Let $u_j$ and $a_j$ as in Lemma \ref{Key}. If $0<\delta_j<\frac{1}{2}$, then
	$$u_j(s)\leq s\frac{1}{a_j^{\frac{1}{2}}}+s^{\frac{1}{2}}(2\delta_j)^{\frac{1}{2}},\quad\forall s\in[0,a_j].$$
\end{lemma}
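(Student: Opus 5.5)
The plan is to integrate the derivative of $u_j$ from $0$ to $s$ and split it against the linear Moser profile singled out by Lemma \ref{Key}; the estimate then follows from Cauchy--Schwarz and the closeness bound of Lemma \ref{Key}. The hypothesis $\delta_j<\frac12$ is only used to ensure $a_j>0$ and $u_j(a_j)\neq 0$. Indeed, $u_j^2(a_j)/a_j=1-\delta_j>\frac12>0$, so the maximizer $a_j$ is a genuine point of $(0,1]$, and $\lambda_j:=u_j(a_j)/a_j$ is well defined.

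\emph{Step 1: decomposition.} For $s\in[0,a_j]$, use $u_j(0)=0$ to write
\[
u_j(s)=\int_0^s u_j'(t)\,dt=\lambda_j s+\int_0^s\bigl(u_j'(t)-\lambda_j\bigr)\,dt .
\]

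\emph{Step 2: the linear term.} From $u_j^2(a_j)=(1-\delta_j)a_j$ we get
\[
|\lambda_j|=\frac{(1-\delta_j)^{1/2}}{a_j^{1/2}}\le \frac{1}{a_j^{1/2}},
\]
so $\lambda_j s\le s\,a_j^{-1/2}$.

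\emph{Step 3: the remainder.} By Cauchy--Schwarz on $[0,s]\subset[0,a_j]$ and Lemma \ref{Key},
\[
\Bigl|\int_0^s(u_j'-\lambda_j)\,dt\Bigr|\le s^{1/2}\Bigl(\int_0^{a_j}|u_j'-\lambda_j|^2dt\Bigr)^{1/2}\le s^{1/2}\delta_j^{1/2}\le s^{1/2}(2\delta_j)^{1/2}.
\]

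Adding Steps 2 and 3 gives the claimed bound. The same argument bounds $|u_j(s)|$, so no sign assumption on $u_j$ is needed.

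There is no real obstacle here; the only point requiring care is the sign and nondegeneracy of $u_j(a_j)$. This is where $\delta_j<\frac12$ is used, to rule out $a_j=0$. The argument even yields the slightly sharper constant $\delta_j^{1/2}$ in place of $(2\delta_j)^{1/2}$.
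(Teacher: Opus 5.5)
Your proof is correct and complete. The decomposition $u_j(s)=\lambda_j s+\int_0^s(u_j'-\lambda_j)\,dt$, the bound $|\lambda_j|=\sqrt{(1-\delta_j)/a_j}\le a_j^{-1/2}$, and Cauchy--Schwarz against Lemma~\ref{Key} give $|u_j(s)|\le s\,a_j^{-1/2}+(\delta_j s)^{1/2}$, which is slightly stronger than the statement.

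The paper gives no proof of its own here; it simply quotes Proposition 4.5 of do \'{O}, Ruf and Ubilla. The same mechanism does appear in Step~1 of Lemma~\ref{taglm3.1}. There $u=\frac{u(a)}{a}\min\{s,a\}+v$ with $\int_0^1|v'|^2=\Delta$, and the extra information $v(a)=0$ sharpens your remainder bound to $|v(s)|^2\le\Delta s(1-s/a)$ on $[0,a]$.

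The factor $2$ in the statement is what you would get by comparing instead with the fixed Moser slope $a_j^{-1/2}$, since then $\int_0^{a_j}|u_j'-a_j^{-1/2}|^2\,dt\le 2-2\sqrt{1-\delta_j}\le 2\delta_j$. That variant needs the normalization $u_j(a_j)>0$. Your choice of slope $u_j(a_j)/a_j$ avoids this, so your argument works regardless of sign.

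Finally, the hypothesis $\delta_j<\frac12$ is not really needed. Every $u_j\in E$ is nontrivial, so $\delta_j<1$ and $a_j\in(0,1]$ hold automatically.
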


In the following, for $0\leq\gamma<1$, $u_j\rightharpoonup 0$ in $E$, $\delta_j\to0$, we prove
$$
I_\gamma(u_j)=\int_0^1 e^{\left(\log\frac{e}{s}+\gamma\log\log\frac{e}{s}\right)\frac{u_j^2(s)}{s}}ds=\int_0^1\left(\frac{e(\log\frac{e}{s})^\gamma}{s}\right)^{\frac{u_j^2}{s}}\to I_\gamma(0).
$$
Since by $u_j\rightharpoonup 0$ we have
$$1=I_\gamma(0)\leq \liminf\limits_{j\to\infty}I_\gamma(u_j),$$
then we only need to prove
\begin{equation}\label{20250624-e1}
\limsup\limits_{j\to\infty}I_\gamma(u_j)\leq 1.
\end{equation}

\paragraph{Case 1:} $\inf\limits_j a_j=a_0>0$. We use the dominated convergence theorem to prove \eqref{20250624-e1}. Since
$$\int_0^1\left(\frac{e(\log\frac{e}{s})^\gamma}{s}\right)^{\frac{u_j^2}{s}}=\int_{\frac{a_0}{4}}^1\left(\frac{e(\log\frac{e}{s})^\gamma}{s}\right)^{\frac{u_j^2}{s}}+\int_0^{\frac{a_0}{4}}\left(\frac{e(\log\frac{e}{s})^\gamma}{s}\right)^{\frac{u_j^2}{s}}.$$
We define the Lebesgue control function
\begin{equation}\nonumber
f(s):=
\left\{
\begin{array}{ll}
\aligned
&\frac{e(\log\frac{e}{s})^\gamma}{s}\ ,\quad \frac{a_0}{4}\leq s\leq1, \\
&\left(\frac{e(\log\frac{e}{s})^\gamma}{s}\right)^{\frac{3}{4}}\ ,\quad 0\leq s\leq\frac{a_0}{4}.
\endaligned
\end{array}
\right.
\end{equation}
Obviously, $f(s)\in L^1(0,1).$
And by $\frac{u_j^2}{s}\leq1$,
$$\left(\frac{e(\log\frac{e}{s})^\gamma}{s}\right)^{\frac{u_j^2}{s}}\leq \frac{e(\log\frac{e}{s})^\gamma}{s},\quad s\in\left[\frac{a_0}{4},1\right].$$
On the other hand, since $a_j\geq a_0>0$,
for $s\in (0,\frac{a_0}{4})$ by Lemma \ref{Key3},
\begin{equation*}
\aligned
\frac{u_j(s)^2}{s}\leq&\frac{1}{s}\bigg[s\frac{1}{a_0^{\frac{1}{2}}}+s^{\frac{1}{2}}(2\delta_j)^{\frac{1}{2}}\bigg]^2\\
\leq&\frac{1}{s}2\left[\frac{s^2}{a_0}+s(2\delta_j)\right]\\
<&\frac{1}{2}+4\delta_j\leq\frac{3}{4},\quad \text{for}~\delta_j~\text{sufficient~small,}
\endaligned
\end{equation*}
and then
$$\left(\frac{e(\log\frac{e}{s})^\gamma}{s}\right)^{\frac{u_j^2}{s}}\leq \left(\frac{e(\log\frac{e}{s})^\gamma}{s}\right)^{\frac{3}{4}},\quad s\in\left(0,\frac{a_0}{4}\right).$$
Therefore, by dominated convergence theorem \eqref{20250624-e1} holds.

\paragraph{Case 2:} $\lim\limits_{j\to\infty} a_j=0$.
We prove \eqref{20250624-e1} by deviding the interval into two parts:







{\it 1. In the interval $(0, a_j)$, we will prove $\int_0^{a_j}\left(\frac{e(\log\frac{e}{s})^\gamma}{s}\right)^{\frac{u_j^2}{s}}ds\to0.$ }

--Case 1.1: there exists $d_0>0$ such that $\delta_j\log\frac{1}{a_j}\ge d_0$, for all $j$.
We now select a fixed large $c> 0$ and a fixed small $\nu > 0$ which will be determined later.




(i) The interval $(0, (1-\nu)a_j)$. By Lemma \ref{Key3} and also see (\cite{DRU}, page 15), we have
$$
\frac{u^2_j(s)}{s}\le 1-\frac{\nu}{2}.
$$
Note that
$$\left(\log\frac{e}{s}\right)^\gamma\leq\frac{c}{s^{\frac{\nu}{2}}}+c,\quad 0<s\leq 1.$$
Hence,
\begin{equation}\nonumber
\aligned
\int_0^{(1-\nu)a_j}\left(\frac{e(\log\frac{e}{s})^\gamma}{s}\right)^{\frac{u_j^2(s)}{s}}ds
\leq&\int_0^{(1-\nu)a_j}\left(\frac{c}{s^{1+\frac{\nu}{2}}}\right)^{1-\frac{\nu}{2}}ds\\
=&cs^{\frac{\nu^2}{4}}\big|_0^{(1-\nu)a_j}\to0,\quad \text{since}~a_j\to 0.
\endaligned
\end{equation}

(ii) For the interval $I_j^0:=[a_j(1-c\delta_j), a_j]\subset [\frac{1}{2}a_j, a_j]$, we can estimate
\begin{align*}
    J_j^1&=\int_{a_j(1-c\delta_j)}^{a_j} \left(\frac{e(\log\frac{e}{s})^\gamma}{s}\right)^{\frac{u^2_j(s)}{s}}\,ds\\
    &\le \int_{a_j(1-c\delta_j)}^{a_j} e^{(\log\frac{e}{s}+\gamma\log\log\frac{e}{s})(1-\delta_j)}\,ds\\
    &\le \int_{a_j(1-c\delta_j)}^{a_j} e^{(\log\frac{2e}{a_j}+\gamma\log\log\frac{2e}{a_j})(1-\delta_j)}\,ds\\
    &=ca_j\delta_j e^{(\log\frac{2e}{a_j}+\gamma\log\log\frac{2e}{a_j})(1-\delta_j)}\\
    &\le ca_j\delta_j \frac{2e}{a_j}(\log\frac{2e}{a_j})^\gamma e^{-\delta_j\log\frac{2e}{a_j}}\\
    &= c\frac{(\log\frac{2e}{a_j})^\gamma}{\log\frac{2e}{a_j}}\delta_j\log\frac{2e}{a_j} e^{-\delta_j\log\frac{2e}{a_j}}\to 0.
\end{align*}

(iii) We consider $I_j^k:=[a_j(1-(k+1)c\delta_j), a_j(1-kc\delta_j)]\subset[\frac{a_j}{2}, a_j]$.
By the assertion (\cite{DRU}, P14), we know that
$$
\frac{u^2_j(s)}{s}\le 1-k\delta_j, s\in I_j^k, k\ge 1,
$$
therefore, we can estimate
\begin{align*}
    J_j^k&=\int_{a_j(1-(k+1)c\delta_j)}^{a_j(1-kc\delta_j)} \left(\frac{e(\log\frac{e}{s})^\gamma}{s}\right)^{\frac{u^2_j(s)}{s}}\,ds\\
    &\le \int_{a_j(1-(k+1)c\delta_j)}^{a_j(1-kc\delta_j)} e^{(\log\frac{2e}{a_j}+\gamma\log\log\frac{2e}{a_j})(1-k\delta_j)}\,ds\\
    &\le ca_j\delta_j \frac{2e}{a_j}(\log\frac{2e}{a_j})^\gamma e^{-k\delta_j\log\frac{2e}{a_j}}\\
    &= c\delta_j (\log\frac{2e}{a_j})^\gamma e^{-k\delta_j\log\frac{2e}{a_j}}
    = c\frac{\delta_j\log\frac{2e}{a_j}}{e^{k\delta_j\log\frac{2e}{a_j}}} \frac{1}{(\log\frac{2e}{a_j})^{1-\gamma}}.
\end{align*}
Then
\begin{align*}
    \int_{(1-\nu)a_j}^{a_j} \left(\frac{e(\log\frac{e}{s})^\gamma}{s}\right)^{\frac{u^2_j(s)}{s}}\,ds
    &\le J_j^1+ \sum_{k=2}^{k_j(\nu)} J_j^k\\
    &\le c\frac{(\log\frac{2e}{a_j})^\gamma}{\log\frac{2e}{a_j}}\delta_j\log\frac{2e}{a_j} e^{-\delta_j\log\frac{2e}{a_j}}+c\sum_{k\ge 1}\frac{\delta_j\log\frac{2e}{a_j}}{e^{k\delta_j\log\frac{2e}{a_j}}} \frac{1}{(\log\frac{2e}{a_j})^{1-\gamma}}\\
    &=c\frac{(\log\frac{2e}{a_j})^\gamma}{\log\frac{2e}{a_j}}\delta_j\log\frac{2e}{a_j} e^{-\delta_j\log\frac{2e}{a_j}}+c\sum_{k\ge 1}\frac{d_j}{e^{kd_j}}\frac{1}{(\log\frac{2e}{a_j})^{1-\gamma}}\to 0.
\end{align*}

--Case 1.2: $\delta_j\log\frac{1}{a_j}\to 0$.
We choose another fixed small $\nu > 0$.

(i) Ihe interval $(0, (1-\nu)a_j)$. Since neither $\delta_j\log\frac{1}{a_j}\geq d_0$ nor $\delta_j\log\frac{1}{a_j}\to 0$ affects the proof in this interval, we have the same conclusion as in Case 1.1-(i):
$$\int^{(1-\nu)a_j}_0\left(\frac{e(\log\frac{e}{s})^\gamma}{s}\right)^{\frac{u_j^2(s)}{s}}\,ds\to 0.
$$

(ii) The right part of the interval $((1- \nu)a_j, a_j)\subset \bigcup_{k=1}^{k_j(\nu)}I_j^k$, where
 $$
 I_j^k=\left[a_j(1-\frac{k}{\log\frac{1}{a_j}}), a_j(1-\frac{k-1}{\log\frac{1}{a_j}})\right],\ \ k=1, 2,\cdots, k_j(\nu),
 $$ and $k_j(\nu)\in N$ denote the smallest integer such that $\frac{k_j(\nu)}{\log\frac{1}{a_j}}\ge \nu$.
By the assertion (\cite{DRU}, page 16), we have
$$
\frac{u^2_j(s)}{s}\le 1-\frac{k-1}{2\log\frac{1}{a_j}}.
$$
So
\begin{align*}
    J_j^k&=\int_{a_j(1-\frac{k}{\log\frac{1}{a_j}})}^{a_j(1-\frac{k-1}{\log\frac{1}{a_j}})} \left(
    \frac{e(\log\frac{e}{s})^\gamma}{s}\right)^{\frac{u^2_j(s)}{s}}\,ds\\
    &\le \int_{a_j(1-\frac{k}{\log\frac{1}{a_j}})}^{a_j(1-\frac{k-1}{\log\frac{1}{a_j}})} e^{(\log\frac{2e}{a_j}+\gamma\log\log\frac{2e}{a_j})(1-\frac{k-1}{2\log\frac{1}{a_j}})}\,ds\\
    &\le \frac{a_j}{\log\frac{1}{a_j}}\frac{2e}{a_j}(\log\frac{2e}{a_j})^\gamma e^{-\frac{k-1}{2\log\frac{1}{a_j}}(\log\frac{2e}{a_j}+\gamma\log\log\frac{2e}{a_j})}\\
    &= c\frac{\log\frac{2e}{a_j}}{\log\frac{1}{a_j}}(\log\frac{2e}{a_j})^{\gamma-1} e^{-\frac{k-1}{2\log\frac{1}{a_j}}(\log\frac{2e}{a_j}+\gamma\log\log\frac{2e}{a_j})}\\
    &\le c(\log\frac{2e}{a_j})^{\gamma-1}(\frac{1}{e^{\frac{1}{4}}})^{k-1}.
\end{align*}
Thus,
\begin{align*}
    \int_{(1-\nu)a_j}^{a_j} \left(
    \frac{e(\log\frac{e}{s})^\gamma}{s}\right)^{\frac{u^2_j(s)}{s}}\le \sum_{k=1}^{k_j(\nu)}J_j^k\le c (\log\frac{2e}{a_j})^{\gamma-1}\sum_{k\ge 1}(\frac{1}{e^{\frac{1}{4}}})^{k-1}\to 0.
\end{align*}

 {\it 2. In the interval $[a_j,1)$, we will prove $\int^1_{a_j}\left(\frac{e(\log\frac{e}{s})^\gamma}{s}\right)^{\frac{u_j^2}{s}}\to1.$ }

--Case 2.1: there exists $d_0>0$ such that $\delta_j\log\frac{1}{a_j}\ge d_0$, for all $j$.
Likewise, we now choose another fixed small $\nu> 0$.

(i) For the left sub-interval $I_j^0 :=[a_j , a_j(1+c\delta_j)]$, we get
\begin{align*}
    J_j^0&=\int_{a_j}^{a_j(1+c\delta_j)}\left(\frac{e(\log\frac{e}{s})^\gamma}{s}\right)^{\frac{u_j^2(s)}{s}}\,ds\\
    &\le \int_{a_j}^{a_j(1+c\delta_j)} e^{(\log\frac{e}{a_j}+\gamma\log\log\frac{e}{a_j})(1-\delta_j)}\,ds\\
    &= ca_j\delta_j e^{(\log\frac{e}{a_j}+\gamma\log\log\frac{e}{a_j})(1-\delta_j)}\\
    &= ca_j\delta_j \frac{e}{a_j}(\log\frac{e}{a_j})^\gamma e^{(\log\frac{e}{a_j}+\gamma\log\log\frac{e}{a_j})(-\delta_j)}\\
    &\le c\delta_j (\log\frac{e}{a_j})^\gamma e^{-\delta_j\log\frac{e}{a_j}}\\
    &= c (\log\frac{e}{a_j})^{\gamma-1} \delta_j\log\frac{e}{a_j} e^{-\delta_j\log\frac{e}{a_j}}\to0.
\end{align*}

(ii) The right sub-interval $[b_j , 1], b_j =a_j(1+k_j(\nu)c\delta_j)$, where $k_j(\nu)\in N$ denote the smallest
integer such that $k_j (\nu)c\delta_j\ge\nu$. By a direct computation (\cite{DRU}, page 19), we get
$$
\frac{u_j^2(s)}{s}\le \frac{\left(b_j^{1/2}(1-\frac{\nu}{c})^{1/2}+(s-b_j)^{1/2}\delta_j^{1/2}\right)^2}{s}.
$$
We then show that
$$
\int_{b_j}^1\left(\frac{e(\log\frac{e}{s})^\gamma}{s}\right)^{\frac{u_j^2(s)}{s}}\,ds\le \int_{b_j}^1\left(\frac{e(\log\frac{e}{s})^\gamma}{s}\right)^{\frac{\left(b_j^{1/2}(1-\frac{\nu}{c})^{1/2}+(s-b_j)^{1/2}\delta_j^{1/2}\right)^2}{s}}\,ds\to 1.
$$
We write
\begin{align*}
&\int_{b_j}^1\left(\frac{e(\log\frac{e}{s})^\gamma}{s}\right)^{\frac{\left(b_j^{1/2}(1-\frac{\nu}{c})^{1/2}+(s-b_j)^{1/2}\delta_j^{1/2}\right)^2}{s}}\,ds \\
=&\int_{0}^1\chi_{[b_j,1]}\left(\frac{e(\log\frac{e}{s})^\gamma}{s}\right)^{\frac{\left(b_j^{1/2}(1-\frac{\nu}{c})^{1/2}+(s-b_j)^{1/2}\delta_j^{1/2}\right)^2}{s}}\,ds,
\end{align*}
then applying the result in (\cite{DRU}, page 19), we see that

$$\frac{\left(b_j^{1/2}(1-\frac{\nu}{c})^{1/2}+(s-b_j)^{1/2}\delta_j^{1/2}\right)^2}{s}<1-\frac{\nu}{2c}$$
and
$$
\chi_{[b_j,1]}\left(\frac{e(\log\frac{e}{s})^\gamma}{s}\right)^{\frac{\left(b_j^{1/2}(1-\frac{\nu}{c})^{1/2}+(s-b_j)^{1/2}\delta_j^{1/2}\right)^2}{s}}\le \chi_{[b_j,1]}\left(\frac{e(\log\frac{e}{s})^\gamma}{s}\right)^{1-\frac{\nu}{2c}}\in L^{1}(0,1).
$$
Furthermore,
$$
\chi_{[b_j,1]}\left(\frac{e(\log\frac{e}{s})^\gamma}{s}\right)^{\frac{\left(b_j^{1/2}(1-\frac{\nu}{c})^{1/2}+(s-b_j)^{1/2}\delta_j^{1/2}\right)^2}{s}}\to 1, \quad\hbox{as}\ j\to \infty
$$
pointwise in $s$. We then conclude by Lebesgue dominated convergence theorem.

(iii) Next, let us consider the middle part $\bigcup_{k=1}^{k_j(\nu)-1}I_j^k$, where
$$
I_j^k=[a_j(1+kc\delta_j), a_j(1+(k+1)c\delta_j)]\ \ k=1,2,\cdots\, k_j(\nu)-1.
$$
Applying the following assertion (\cite{DRU}, page 17):
$$
\frac{u_j^2(s)}{s}\le 1-k\delta_j\quad \text{for}~s\in I_j^k,
$$
we compute
\begin{align*}
    J_j^k&=\int_{a_j(1+kc\delta_j)}^{a_j(1+(k+1)c\delta_j)}\left(\frac{e(\log\frac{e}{s})^\gamma}{s}\right)^{\frac{u_j^2(s)}{s}}\,ds\\
    &\le \int_{a_j(1+kc\delta_j)}^{a_j(1+(k+1)c\delta_j)}e^{(\log\frac{e}{a_j}+\gamma\log\log\frac{e}{a_j})(1-k\delta_j)}\,ds\\
    &\le ca_j\delta_j\frac{e}{a_j}(\log\frac{e}{a_j})^\gamma e^{-k\delta_j\log\frac{e}{a_j}}\\
    &\le c\delta_j \frac{e}{a_j}(\log\frac{e}{a_j})^\gamma e^{-k\delta_j\log\frac{e}{a_j}}\\
    &=c (\log\frac{e}{a_j})^{\gamma-1}\frac{\delta_j\log\frac{e}{a_j}}{e^{k\delta_j\log\frac{e}{a_j}}}.
\end{align*}
Then by $d_j:=\delta_j\log\frac{e}{a_j}\geq d_0$,
\begin{align*}
\int_{a_j}^{a_j(1+k_j(\nu)c\delta_j)} \left(\frac{e(\log\frac{e}{s})^\gamma}{s}\right)^{\frac{u_j^2(s)}{s}}\,ds\le& \sum_{k=0}^{k_j(\nu)}J_j^k\\
    \le &c(\log\frac{e}{a_j})^{\gamma-1}\sum_{k\ge 0}\frac{d_j}{e^{kd_j}}\to 0.
\end{align*}

--Case 2.2: $\delta_j\log\frac{1}{a_j}\to 0$.

(i) The left sub-interval $[a_j, b_j]=\bigcup_{k=0}^{k_j(\nu)-1}I_j^k, b_j=a_j(1+\frac{k_j(\nu)}{\log\frac{1}{a_j}})$, where $k_j (\nu)$ denote the
smallest integer with $\frac{k_j(\nu)}{\log\frac{1}{a_j}}\ge \nu$ for some small $\nu > 0$ and
$$
I_j^k=[a_j(1+\frac{k}{\log\frac{1}{a_j}}), a_j(1+\frac{k+1}{\log\frac{1}{a_j}})], \ \ k = 0, 1, 2, \cdots, k_j(\nu)-1.
$$
 Similarly, by the assertion (\cite{DRU}, page 19): for $s\in I_j^k$,
$$
\frac{u_j^2(s)}{s}\le 1-\frac{k}{2\log\frac{1}{a_j}},
$$
we obtain
\begin{align*}
    J_j^k&=\int_{a_j(1+\frac{k}{\log\frac{1}{a_j}})}^{a_j(1+\frac{(k+1)}{\log\frac{1}{a_j}})}\left(\frac{e(\log\frac{e}{s})^\gamma}{s}\right)^{\frac{u_j^2(s)}{s}}\,ds\\
    &\le \int_{a_j(1+\frac{k}{\log\frac{1}{a_j}})}^{a_j(1+\frac{(k+1)}{\log\frac{1}{a_j}})} e^{(\log\frac{e}{a_j}+\gamma\log\log\frac{e}{a_j})(1-\frac{k}{2\log\frac{1}{a_j}})}\,ds\\
    &\le a_j\frac{1}{\log\frac{1}{a_j}}\frac{e}{a_j}(\log\frac{e}{a_j})^\gamma e^{-\frac{k\log\frac{1}{a_j}}{2\log\frac{1}{a_j}}}\\
    &=c (\log\frac{e}{a_j})^{\gamma-1}\frac{\log\frac{e}{a_j}}{\log\frac{1}{a_j}} e^{-\frac{k}{2}},
\end{align*}
then it follows again that
\begin{align}
    \int_{a_j}^{b_j} \left(\frac{e(\log\frac{e}{s})^\gamma}{s}\right)^{\frac{u_j^2(s)}{s}}\,ds
    &\le \sum_{k=1}^{k_j(\nu)}J_j^k\\
    &\le c(\log\frac{e}{a_j})^{\gamma-1}\frac{\log\frac{e}{a_j}}{\log\frac{1}{a_j}}\sum_{k\ge 1}e^{-\frac{k}{2}}\to 0.
\end{align}

(ii) The right sub-interval $[b_j , 1]$. Since neither $\delta_j\log\frac{1}{a_j}\geq d_0$ nor $\delta_j\log\frac{1}{a_j}\to 0$ affects the proof in this interval, we have the same conclusion as in  Case 2.1-(ii):
$$
\int_{b_j}^1\left(\frac{e(\log\frac{e}{s})^\gamma}{s}\right)^{\frac{u_j^2(s)}{s}}\,ds\to 1.
$$

Summarizing the integral estimates in $(0,a_j)$ and $[a_j,1)$ under all possible scenarios, we obtain \eqref{20250624-e1} in this Case 2.

This together with Case 1 completes the proof of Lemma \ref{lm1}. Then, by Lemma \ref{lm1} and
Proposition \ref{pro2.2}, Theorem \ref{Th2} holds.

\section{ Exact limit of the infinitesimal Moser sequence: $1+2e$}\label{se3}
In what follows, we consider the critical case $\gamma=1$, and write the functional $I_1$ as $I$.
We claim
\begin{equation}\label{20260909-mo}\lim\limits_{j\to\infty}I(m_j)=\lim\limits_{j\to\infty}\int_0^1 e^{\left(\log\frac{e}{s}+\log\log\frac{e}{s}\right)\frac{m_j^2}{s}}ds=1+2e,\end{equation}
where $m_j$ is the infinitesimal Moser sequence
\begin{equation}\nonumber
m_j(s)=
\left\{
\begin{array}{ll}
\aligned
&\sqrt{j}s\ ,\quad 0\leq s\leq\frac{1}{j}, \\
&\frac 1 {\sqrt{j}}\ ,\quad \frac{1}{j}\leq s\leq1.
\endaligned
\end{array}
\right.
\end{equation}
 I) Upper bound:
 a) For $\varepsilon>0$ fixed, note that
 $$\log\frac{e}{s}\leq\frac{c(\varepsilon)}{s^{\frac{\varepsilon}{2}}},~\forall s\in(0,1],$$
 we have
\begin{align*}\int_0^{\frac{1-\varepsilon}{j}}\left(\frac{e\log\frac{e}{s}}{s}\right)^{\frac{m_j^2}{s}}ds=&\int_0^{\frac{1-\varepsilon}{j}}\left(\frac{e\log\frac{e}{s}}{s}\right)^{js}ds \\
\leq& \int_0^{\frac{1-\varepsilon}{j}}\left(\frac{c(\varepsilon)}{s^{1+\frac{\varepsilon}{2}}}\right)^{js}ds\leq \int_0^{\frac{1-\varepsilon}{j}}\left(\frac{c(\varepsilon)}{s^{1+\frac{\varepsilon}{2}}}\right)^{1-\varepsilon}ds\\
\leq &\int_0^{\frac{1-\varepsilon}{j}}\frac{c(\varepsilon)}{s^{1-\frac{\varepsilon}{2}}}ds=\frac{2}{\varepsilon}c(\varepsilon)\left(\frac{1-\varepsilon}{j}\right)^{\frac{\varepsilon}{2}}\to0,\end{align*}
 as $j\to\infty.$

b) For $\varepsilon>0$ fixed,
\begin{align*}\int_{\frac{1-\varepsilon}{j}}^{\frac{1}{j}}\left(\frac{e\log\frac{e}{s}}{s}\right)^{\frac{m_j^2}{s}}ds=&\int_{\frac{1-\varepsilon}{j}}^{\frac{1}{j}}\left(\frac{e\log\frac{e}{s}}{s}\right)^{js}ds
\\
\leq &\int_{\frac{1-\varepsilon}{j}}^{\frac{1}{j}}\left(\frac{e\log\frac{ej}{1-\varepsilon}}{\frac{1-\varepsilon}{j}}\right)^{js}ds\\
=&\frac{\frac{ej}{1-\varepsilon}\log\left(\frac{ej}{1-\varepsilon}\right)\left(1-e^{-\varepsilon\log\left(\frac{ej}{1-\varepsilon}\log\left(\frac{ej}{1-\varepsilon}\right)\right)}\right)}{j\log\left(\frac{ej}{1-\varepsilon}\log\left(\frac{ej}{1-\varepsilon}\right)\right)}\to\frac{e}{1-\varepsilon},\quad\text{as}~j\to\infty.\end{align*}

c) Next, we look at the interval $\left(\frac{1}{j},\frac{1+\varepsilon}{j}\right)$. We show

\begin{align*}\int_{\frac{1}{j}}^{\frac{1+\varepsilon}{j}}\left(\frac{e}{s}\log\left(\frac{e}{s}\right)\right)^{\frac{m_j^2}{s}}ds=&\int_{\frac{1}{j}}^{\frac{1+\varepsilon}{j}}\left(\frac{e}{s}\log\left(\frac{e}{s}\right)\right)^{\frac{1}{sj}}ds\leq \int_{\frac{1}{j}}^{\frac{1+\varepsilon}{j}}\left(ej\log\left(ej\right)\right)^{\frac{1}{sj}}ds \\
=&\frac{1}{j}\int^1_{\frac{1}{1+\varepsilon}}\left(ej\log\left(ej\right)\right)^{t}\frac{1}{t^2}dt\\
\leq&\frac{(1+\varepsilon)^2}{j}\int^1_{\frac{1}{1+\varepsilon}}e^{t\log(ej\log(ej))}dt \\
=&\frac{(1+\varepsilon)^2}{j}\frac{ej\log(ej)}{\log(ej(\log(ej))}\left[1-e^{\left(\frac{1}{1+\varepsilon}-1\right)\log(ej\log(ej))}\right]\to(1+\varepsilon)^2e.\end{align*}

 d) Next, we see that
 \begin{align*}\int^{\frac{2}{j}}_{\frac{1+\varepsilon}{j}}\left(\frac{e}{s}\log\left(\frac{e}{s}\right)\right)^{\frac{m_j^2}{s}}ds=&\int^{\frac{2}{j}}_{\frac{1+\varepsilon}{j}}\left(\frac{e}{s}\log\left(\frac{e}{s}\right)\right)^{\frac{1}{sj}}ds\\
\leq &\int^{\frac{2}{j}}_{\frac{1+\varepsilon}{j}}\left(\frac{(\frac{c(\varepsilon)}{s})^{\varepsilon/2}}{s}\right)^{\frac{1}{sj}}ds\leq\int^{\frac{2}{j}}_{\frac{1+\varepsilon}{j}}\left(\frac{c(\varepsilon)}{s^{1+\frac{\varepsilon}{2}}}\right)^{\frac{1}{1+\varepsilon}}ds\leq \int^{\frac{2}{j}}_{\frac{1+\varepsilon}{j}}\frac{c(\varepsilon)}{s^{1-\frac{\varepsilon}{2}}}ds \\
 =&\frac{2}{\varepsilon}c(\varepsilon)\left[(\frac{2}{j})^{\frac{\varepsilon}{2}}-(\frac{1+\varepsilon}{j})^{\frac{\varepsilon}{2}}\right]\to 0,\quad\text{as}~j\to\infty.\end{align*}

e) Now, we show
$$\int_{\frac{2}{j}}^{j^{-\frac{3}{4}}}\left(\frac{e}{s}\log\frac{e}{s}\right)^{\frac{1}{sj}}ds\leq\int_{\frac{2}{j}}^{j^{-\frac{3}{4}}}\left(\frac{ej}{2}\log\frac{ej}{2}\right)^{\frac{1}{2}}ds\leq j^{-\frac{3}{4}}\left(\frac{ej}{2}\log\frac{ej}{2}\right)^{\frac{1}{2}}\to0.$$

 f) Finally, we have
 $$\int^1_{j^{-\frac{3}{4}}}\left(\frac{e}{s}\log\frac{e}{s}\right)^{\frac{1}{sj}}ds\leq \left(ej^{\frac{3}{4}}\log(ej^{\frac{3}{4}})\right)^{j^{-\frac{1}{4}}}=e^{j^{-\frac{1}{4}}\log\left(ej^{\frac{3}{4}}\log\left(ej^{\frac{3}{4}}\right)\right)}\to1.$$

Joining estimates a)- f) yields
$$\lim\limits_{j\to\infty}\int_0^1 e^{\left(\log\frac{e}{s}+\log\log\frac{e}{s}\right)\frac{m_j^2(s)}{s}}ds\leq\frac{e}{1-\varepsilon}+(1+\varepsilon)^2e+1.$$
 Since $\varepsilon>0$ is arbitrary, we obtain
\begin{equation}\label{20260909-mos1}\lim\limits_{j\to\infty}\int_0^1 e^{\left(\log\frac{e}{s}+\log\log\frac{e}{s}\right)\frac{m_j^2(s)}{s}}ds\leq 2e+1.\end{equation}

 II) Lower bound:
 we have
 $$\int_0^1 e^{\left(\log\frac{e}{s}+\log\log\frac{e}{s}\right)\frac{m_j^2(s)}{s}}ds\geq \int_{\frac{1}{2j}}^{\frac{1}{j}}\left(\frac{e}{s}\log\frac{e}{s}\right)^{js}ds+\int_{\frac{1}{j}}^{\frac{1+\varepsilon}{j}}\left(\frac{e}{s}\log\frac{e}{s}\right)^{\frac{1}{js}}ds+\int_{\frac{1+\varepsilon}{j}}^1\left(\frac{e}{s}\log\frac{e}{s}\right)^{\frac{1}{js}}ds.$$
a) The first integral can be estimated as
\begin{align*}\int_{\frac{1}{2j}}^{\frac{1}{j}}\left(\frac{e}{s}\log\frac{e}{s}\right)^{js}ds\geq &\int_{\frac{1}{2j}}^{\frac{1}{j}}e^{\left(\log(ej)+\log\log(ej)\right)js}ds\\
=&\frac{1}{j}\frac{ej\log(ej)-(ej\log(ej))^{\frac{1}{2}}}{\log(ej)+\log\log(ej)}=e+o(1).\end{align*}
 b) The last integral can be estimated as
 $$\int^1_{\frac{1+\varepsilon}{j}}\left(\frac{e}{s}\log\frac{e}{s}\right)^{\frac{1}{js}}ds=\int^1_{\frac{1+\varepsilon}{j}}e^{\frac{1}{js}\log\left(\frac{e}{s}\log\frac{e}{s}\right)}ds\geq1-\frac{1+\varepsilon}{j}\to1,\quad\text{as}~j\to\infty. $$
c) It remains to estimate the integral
$$\int_{\frac{1}{j}}^{\frac{1+\varepsilon}{j}}\left(\frac{e}{s}\log\frac{e}{s}\right)^{\frac{1}{js}}ds=\int_{\frac{1}{j}}^{\frac{1+\varepsilon}{j}}e^{\frac{1}{js}\left(\log\frac{e}{s}+\log\log\frac{e}{s}\right)}ds.$$
Since the exponent in the last integral is decreasing, we can estimate
\begin{align*}&\int_{\frac{1}{j}}^{\frac{1+\varepsilon}{j}}e^{\frac{1}{js}\left(\log\frac{e}{s}+\log\log\frac{e}{s}\right)}ds\geq \int_{\frac{1}{j}}^{\frac{1+\varepsilon}{j}}e^{\frac{1}{js}\left(\log\frac{ej}{1+\varepsilon}+\log\log\frac{ej}{1+\varepsilon}\right)}ds\\
=&\int^1_{\frac{1}{1+\varepsilon}}e^{t\left(\log\frac{ej}{1+\varepsilon}+\log\log\frac{ej}{1+\varepsilon}\right)}\frac{1}{jt^2}dt\geq\frac{1}{j}\int^1_{\frac{1}{1+\varepsilon}}e^{t\left(\log\frac{ej}{1+\varepsilon}+\log\log\frac{ej}{1+\varepsilon}\right)}dt \\
 =&\frac{1}{j}\frac{\frac{ej}{1+\varepsilon}\log\left(\frac{ej}{1+\varepsilon}\right)}{\log\frac{ej}{1+\varepsilon}+\log\log\frac{ej}{1+\varepsilon}}\left(1-e^{-\frac{\varepsilon}{1+\varepsilon}\left(\log\frac{ej}{1+\varepsilon}+\log\log\frac{ej}{1+\varepsilon}\right)}\right)\to\frac{e}{1+\varepsilon},\quad\text{as}~j\to\infty. \end{align*}
 The estimates a)-c) imply the lower estimate
 \begin{equation}\label{20260909-mos2}\lim\limits_{j\to\infty}\int_0^1 e^{\left(\log\frac{e}{s}+\log\log\frac{e}{s}\right)\frac{m_j^2(s)}{s}}ds\geq 2e+1\end{equation}
 since $\varepsilon>0$ can be chosen arbitrarily small.

Combing the upper bound estimate \eqref{20260909-mos1} and the lower estimate \eqref{20260909-mos2}, \eqref{20260909-mo} holds.

\section{The sharp concentration level  $1+e^2$}\label{scl}
\noindent

Denote
$$\beta(t):=\frac{\log\frac{e}{t}+\log\log\frac{e}{t}}{t},\quad I(v)=\int_0^1e^{\beta(t) v^2(t)}dt,\quad v\in E.$$
In this section, we investigate the sharp concentration level $$L_{conc}:=\sup\limits_{v_j\in E,~v_j~\text{is~NCS}}\limsup\limits_{j\to\infty}I(v),$$ which is an energy threshold.
From Sec. 3, since the infinitesimal Moser sequence is a special NCS then $$L_{conc}\geq 1+2e.$$
In this section, we prove that the sharp concentration level $L_{conc}$ is $1+e^2$.

We first give a local estimate under small amplitude, which will be also used in Sec. 6. 
\begin{lemma}[Local estimate under small amplitude]\label{taglm3.1}
Fix once and for all
 $r:=\frac12,$
and define
\[
 J_r(u)=J_{1/2}(u):=
 \int_0^r\bigl(e^{\beta(s)u(s)^2}-1\bigr)\,ds.
\]
There exists an absolute constant $m_0>0$ such that, for every $u\in E$
with $\|u\|_{L^\infty(0,1)}\le m_0$,
\[
 J_r(u)\le e^2.
\]
\end{lemma}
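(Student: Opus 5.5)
We plan to write $\phi(s):=u(s)^2/s\in[0,1]$; by \eqref{20240709-e1}, $\phi\le1$ for every $u\in E$. With $\beta(s)u^2=\phi(s)\bigl(\log\frac es+\log\log\frac es\bigr)$, the integrand becomes $\bigl(\frac es\log\frac es\bigr)^{\phi(s)}-1$. We then pass to the logarithmic variable $\tau=\log\frac es\in[1+\log2,\infty)$, so that $ds=e^{1-\tau}d\tau$ and
\[
J_r(u)=\int_{1+\log 2}^\infty\Bigl(e^{\phi(\tau)(\tau+\log\tau)}-1\Bigr)e^{1-\tau}\,d\tau .
\]
The integrand is only of size $O(1)$ per unit $\tau$ where $\phi(\tau)\ge1-\frac{\log\tau+O(1)}{\tau}$, that is, where $u^2/s$ is almost extremal. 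We will therefore bound the measure (in $\tau$) of the set where $\phi$ is close to $1$ by an energy argument. This set is quantified by Lemma \ref{Key} and Lemma \ref{Key3}.

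\textbf{Three-region decomposition.} Let $a$ be the maximizer of $u^2/s$ and set $1-\delta=u^2(a)/a$. The small amplitude hypothesis gives $u(a)^2\le m_0^2$, hence $a\ge$ something only if $\delta$ is close to $1$. More precisely, $a=u(a)^2/(1-\delta)\le m_0^2/(1-\delta)$. So either $\delta\ge1/2$ and then $\phi\le1/2$ pointwise, or $a\le 2m_0^2$ is tiny. The first case is trivial: the integrand is at most $(e/s\cdot\log(e/s))^{1/2}$, and we plan to make the integral below $e^2$ using $\|u\|_\infty\le m_0$ to also force $\phi\le m_0^2/s$ for $s$ not too small. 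In the second case we split $(0,r)$ into three regions.

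\emph{Inner region} $(0,(1-\nu)a)$: Lemma \ref{Key3} gives $\phi\le 1-\nu/2$, so the contribution is $O(a^{\nu^2/4})$, exactly as in Sec.~3(a).

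\emph{Outer region} $(Ka,r)$: here $u(s)^2\le m_0^2$ gives $\phi\le m_0^2/s$. Also $u(s)\le u(a)+\sqrt{\delta(s-a)}$ by Lemma \ref{Key}, which gives $\phi(s)\le (\sqrt{a}+\sqrt{\delta s})^2/s$. Integrating $\bigl(\frac es\log\frac es\bigr)^{\phi}-1$ then gives a contribution of order $O(\sqrt{a}\cdot\mathrm{polylog}+m_0^2\log^2)$, which is small once $m_0$ is small.

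\emph{Transition region} $((1-\nu)a,Ka)$: this is where the mass $e^2$ arises. We use the one-dimensional energy identity to bound $u$ from above, with $\int_0^a(u'-u(a)/a)^2+\int_a^1u'^2\le\delta$. Near $a$, this yields $\phi(s)\le (1-\delta)\,\frac{a}{s}$-type bounds for $s\ge a$, and $\phi(s)\le (1-\delta)\frac sa+2\sqrt{\delta(1-s/a)}$ for $s\le a$. Scaling $s=a t$ with $L=\log\frac ea$, the integrand on the right side is about $a\,\bigl(\frac eaL\bigr)^{(1-\delta)/t}$, and integrating in $t\in[1,K]$ gives at most $e\,L\cdot\frac{a}{a}\cdot\frac{e^{-\delta L}}{L}\cdot(\ldots)$. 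Similarly the left side gives $\le e\cdot e^{-\delta L}\cdot(\ldots)$, mirroring the computation of $2e$ in Sec.~3. The goal is to show that the optimal trade-off between the energy $\delta$ and the deviation terms $2\sqrt{\delta(\cdot)}$ yields at most $e\cdot e=e^2$. Concretely, we expect the bound $\sup_{d\ge0}(\text{left}+\text{right})(d)$ with $d=\delta L$, where the square-root correction contributes a factor $e^{2\sqrt{d\,x}}$ that must be optimized against $e^{-d}$. This is then combined with the lower-order terms above.

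\textbf{Main obstacle.} The hard step is the sharp transition-region estimate. The crude bounds of Lemma \ref{Key3} lose constants, so we will need to redo the energy estimate in the variable $\tau$. The plan is to minimize $\int u'^2$ subject to a prescribed profile of $\phi$ near $a$, in order to get the exact exponent, and then show that the resulting one-dimensional variational bound integrates to at most $e^2$ uniformly in $a\to0$. If exact sharpness is out of reach, we will first aim for $e^2$ as a convenient upper bound by crudely dominating, e.g. $e\cdot(1+\varepsilon)$ on each side combined with $e^{\sqrt{\cdot}}$ factors bounded via $2\sqrt{dx}\le d+x$. Finally, we choose $m_0$ so that all error terms fit into the gap $e^2-2e>0$.
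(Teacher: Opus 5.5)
Your outline (logarithmic variable, maximizer $a$ of $u^2/s$, discarding an inner and an outer region, using $\|u\|_\infty\le m_0$ only to force $a$ small when $\delta$ is not large) parallels Steps 1, 2 and 4 of the paper. But the step that carries the content of the lemma, a sharp bound for the transition region, is only stated as a goal, and the mechanisms you offer for it fail.

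\emph{No gap $e^2-2e$.} The bound is sharp among small-amplitude functions. The sequence $v_R$ of Section~\ref{scl} has $\|v_R\|_\infty\to0$ and $J_{1/2}(v_R)\to e^2$; the value $2e$ is only what the particular profile $m_j$ produces. So the transition region can carry essentially all of $e^2$. Any error that does not vanish as $a\to0$, such as your outer-region term $O(m_0^2\log^2)$, therefore cannot be absorbed.

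\emph{Pointwise bounds cannot reach $e^2$.} On $s\ge a$, Lemma~\ref{Key} gives $u(s)\le u(a)+\sqrt{\delta(s-a)}$; near $a$ you dropped this square-root term. On $s\le a$ one has $u^2/s\le\bigl(\sqrt{(1-\delta)s/a}+\sqrt{\delta(1-s/a)}\bigr)^2$. To leading order both envelopes exceed the cap $1-\delta$ on $[a(1-4\delta),a(1+4\delta)]$, so they allow $u^2/s=1-\delta$ there. With $d=\delta\log\frac ea$ fixed and $a\to0$, this plateau alone contributes about $8\,d\,e^{1-d}$, which equals $8>e^2$ at $d=1$. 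Your fallback via $2\sqrt{dx}\le d+x$ is worse: it reduces the lower bound $(\sqrt d-\sqrt x)^2$ for the scaled deficit to $0$ and destroys integrability. Your remark about minimizing $\int u'^2$ under a prescribed profile points the right way, but no such argument is given.

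What is missing is a global use of the energy. The paper sets $w(t)=\widetilde u(e^{1-t})e^{(t-1)/2}$ on $\mathbb R$, with $u$ extended constantly beyond $r$, so that $\int_{\mathbb R}(|w'|^2+w^2/4)\,dt\le1$. It then localizes to
$J_r(u)\le eTe^{-p}\int_{\mathbb R}(e^{pw^2}-1)\,dt+C\Delta+C/T^2$, where $T=\log\frac ea$ and $p=T+\log T$.
Next it proves Proposition~\ref{20260909-pro1}. With $\lambda=M/(4(e^B-1))$ and $\Psi(v)=v^2/4-\lambda(e^{pv^2}-1)\ge0$, the inequality $|w'|^2+\Psi(w)\ge2|w'|\sqrt{\Psi(w)}$, integrated as a total variation on each side of the peak, yields
$\int_{\mathbb R}(e^{pw^2}-1)\,dt\le4(e^B-1)\bigl(\frac pB-1+\frac\alpha B+\frac A{B^2}\bigr)$, with $\alpha=2-2\log2$.
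The main term becomes $\frac{eT}{p}f(d)$ with $f(d)=4e^{-d}(d+\alpha)\le f(1-\alpha)=e$.

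The margin that absorbs errors is not a fixed gap. It is $e^2(1-T/p)=e^2\log T/p$, from $T/p=T/(T+\log T)<1$. This margin beats $C_1\Delta+C_2/p$ when $d\le D$, since then $\Delta\le D/p$; for $d>D$ one uses $f(d)\le e/2$. Without an estimate of this kind, with all errors controlled at order $1/p$, your plan does not reach $e^2$.
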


\begin{remark}
This is a uniform upper bound for all functions with sufficiently small
amplitude, not merely an upper bound for the limit along concentrating
sequences.
\end{remark}

\begin{proof}
Throughout the proof, $c,C>0$ denote absolute constants, whose values may
change from line to line. The constants $C_1,C_2$ introduced below are
then fixed before any subsequent thresholds are chosen.

\medskip
\noindent\textit{Step 1. A pointwise estimate relative to a maximal point.}
For $s>0$, set $q(s)=u(s)^2/s$. Since $u(0)=0$,
\[
 0\le q(s)\le\int_0^s|u'(\tau)|^2\,d\tau\le1,
 \qquad \lim_{s\downarrow0}q(s)=0.
\]
Thus $q$, extended by $q(0)=0$, is continuous on $[0,1]$.
Since $u\not\equiv0$, there is $a\in(0,1]$ such that
\[
 q(a)=\max_{[0,1]}q=1-\Delta,
 \qquad 0\le\Delta<1.
\]
Write
\[
 T=\log\frac ea,\qquad p=T+\log T.
\]
Decompose
\[
 u(s)=\frac{u(a)}a\min\{s,a\}+v(s).
\]
The two derivatives are orthogonal in $L^2(0,1)$, because
$v(0)=v(a)=0$. Hence
\begin{equation}\label{eq:sa-remainder-energy}
 \int_0^1|v'(s)|^2\,ds=\Delta.
\end{equation}
For $s\le a$, use
\[
 v(s)=\int_0^a
 \left(\mathbf{1}_{[0,s]}(\tau)-\frac sa\right)v'(\tau)\,d\tau;
\]
for $s\ge a$, use $v(s)=\int_a^s v'(\tau)\,d\tau$.
The Cauchy--Schwarz inequality gives
\[
 |v(s)|^2\le
 \begin{cases}
  \Delta s(1-s/a),&0\le s\le a,\\
  \Delta(s-a),&a\le s\le1.
 \end{cases}
\]
Introduce
\[
 t=\log\frac es,\qquad
 W(t)=\frac{u(e^{1-t})}{\sqrt{e^{1-t}}},\qquad
 z(t)=1-W(t)^2,\qquad t\ge1.
\]
Then
\begin{equation}\label{eq:sa-envelope}
 |W(t)|\le
 \sqrt{1-\Delta}\,e^{-|t-T|/2}
 +\sqrt{\Delta}\,\sqrt{1-e^{-|t-T|}},
 \qquad z(t)\ge\Delta.
\end{equation}
To obtain a useful lower bound for $z$, put $x=1-e^{-|t-T|}$.
The complementary-square identity and
$(A-B)^2\ge A^2/2-B^2$ yield
\[
 \begin{aligned}
 z(t)&\ge
 \left(\sqrt{1-\Delta}\sqrt{x}
       -\sqrt{\Delta}\sqrt{1-x}\right)^2\\
 &\ge \frac{x}{2}-\Delta.
 \end{aligned}
\]
Taking a suitable convex combination of this estimate and $z\ge\Delta$,
and using $1-e^{-h}\ge c\min\{h,1\}$ for $h\ge0$, we obtain
\begin{equation}\label{eq:sa-deficit}
 z(t)\ge c\bigl(\Delta+\min\{|t-T|,1\}\bigr),\qquad t\ge1.
\end{equation}
In particular,
\[
 z(t)\ge c|t-T|\quad\text{if }|t-T|\le1,
 \qquad
 z(t)\ge c\quad\text{if }|t-T|\ge1.
\]

\medskip
\noindent\textit{Step 2. Localization near the maximal point.}
Fix a sufficiently small absolute constant $\delta_0\in(0,1/4)$.
In this step and the next, assume
\[
 \Delta\le\delta_0,\qquad T\ge T_0,
\]
where $T_0$ is a sufficiently large absolute constant.
All requirements on $T_0$ below are independent of $u$.
Set $t_0=\log(e/r)=\log(2e)$.
Extend $u|_{[0,r]}$ constantly to $s>r$, and denote the extension by
$\widetilde u$. Define
\[
 w(t)=\frac{\widetilde u(e^{1-t})}{\sqrt{e^{1-t}}},\qquad t\in\mathbb R.
\]
Then $w=W$ on $[t_0,\infty)$ and
\[
 w'(t)=\frac{w(t)}2-\sqrt{s}\,\widetilde u'(s),\qquad s=e^{1-t}.
\]
For $0<\varepsilon<r<R$, direct integration gives
\[
 \begin{aligned}
 &\int_{\log(e/R)}^{\log(e/\varepsilon)}
       \left(|w'|^2+\frac{w^2}{4}\right)\,dt\\
 &\quad=
 \int_\varepsilon^R
 \left[\left(\widetilde u'-\frac{\widetilde u}{2s}\right)^2
             +\frac{\widetilde u^2}{4s^2}\right]\,ds\\
 &\quad=
 \int_\varepsilon^R|\widetilde u'|^2\,ds
 -\frac12\left[\frac{\widetilde u(s)^2}{s}\right]_\varepsilon^R.
 \end{aligned}
\]
The boundary terms tend to zero as $\varepsilon\downarrow0$ and
$R\to\infty$: at zero this follows from $q(\varepsilon)\to0$, and at
infinity from the constant extension. Consequently $w\in H^1(\mathbb R)$
and
\begin{equation}\label{eq:sa-energy}
 H(w):=\int_{\mathbb R}\left(|w'|^2+\frac{w^2}{4}\right)\,dt
 =\int_0^r|u'(s)|^2\,ds\le1.
\end{equation}
For $T_0$ sufficiently large, $a<r$ and the extension preserves the
maximum:
\begin{equation}\label{eq:sa-maximum}
 \max_{\mathbb R}w^2=1-\Delta.
\end{equation}
Indeed, for $s\ge r$ one has
$\widetilde u(s)^2/s\le u(r)^2/r\le1-\Delta$, and equality is attained
at $s=a$.

For $t\ge t_0$, put $P(t)=t+\log t$. The change of variables
$s=e^{1-t}$ gives
\begin{equation}\label{eq:sa-transformed}
 \begin{aligned}
 J_r(u)
 &=e\int_{t_0}^\infty e^{-t}
          \bigl(e^{P(t)w(t)^2}-1\bigr)\,dt\\
 &=e\int_{t_0}^\infty
          \bigl(t e^{-P(t)z(t)}-e^{-t}\bigr)\,dt.
 \end{aligned}
\end{equation}
We split this integral into three regions. Enlarge $T_0$ so that
$T/4>t_0$, $T-1>T/4$, and $T-1\ge T/2$.

\smallskip
\noindent\emph{The central region $I=[T-1,T+1]$.}
For fixed $z\in[0,1]$, let
$\Phi_z(\xi)=\xi e^{-(\xi+\log\xi)z}$. For $\xi\ge1$,
\[
 |\Phi_z'(\xi)|
 =\xi^{-z}e^{-\xi z}|1-(\xi+1)z|
 \le C(1+\xi z)e^{-\xi z}
 \le C e^{-\xi z/2}.
\]
For each $t\in I$, apply the mean value theorem with $z=z(t)$ held
fixed. Using \eqref{eq:sa-deficit} and $\xi\ge T/2$ between $t$ and $T$,
we find
\[
 \left|t e^{-P(t)z(t)}-T e^{-p z(t)}\right|
 \le C|t-T|e^{-cT|t-T|}.
\]
Also,
\[
 |e^{-T}-e^{-t}|\le C e^{-T}|t-T|,\qquad t\in I.
\]
The exact difference of the two integrands is
\[
 \begin{aligned}
 &\bigl(t e^{-P(t)z(t)}-e^{-t}\bigr)
       -T e^{-p}\bigl(e^{p w(t)^2}-1\bigr)\\
 &\qquad=t e^{-P(t)z(t)}-T e^{-p z(t)}+e^{-T}-e^{-t},
 \end{aligned}
\]
where $T e^{-p}=e^{-T}$. Integrating and using
$\int_0^\infty h e^{-cTh}\,dh\le C/T^2$, we obtain
\begin{equation}\label{eq:sa-central}
 e\int_I e^{-t}\bigl(e^{P(t)w(t)^2}-1\bigr)\,dt
 \le eT e^{-p}\int_I\bigl(e^{p w(t)^2}-1\bigr)\,dt
        +\frac{C}{T^2}.
\end{equation}

\smallskip
\noindent\emph{The region $\Omega=\{t\ge T/4:|t-T|>1\}$.}
Here $z(t)\ge c$ by \eqref{eq:sa-deficit}, so
\[
 \begin{aligned}
 e\int_\Omega\bigl(t e^{-P(t)z(t)}-e^{-t}\bigr)\,dt
 &\le C\int_{T/4}^\infty t e^{-ct}\,dt\\
 &\le C(1+T)e^{-cT/4}\le\frac{C}{T^2}.
 \end{aligned}
\]
Here we first discard the negative term and then enlarge the domain of
the resulting nonnegative upper bound.

\smallskip
\noindent\emph{The region $[t_0,T/4]$.}
By \eqref{eq:sa-envelope},
\[
 w(t)^2\le2\Delta+2e^{-3T/4}\le\frac12,
\]
after choosing $\delta_0$ small and $T_0$ large. Since $e^x-1\le xe^x$
for $x\ge0$,
\[
 e^{1-t}\bigl(e^{P(t)w(t)^2}-1\bigr)
 \le e P(t)t^{1/2}e^{-t/2}w(t)^2.
\]
The integral of $P(t)t^{1/2}e^{-t/2}$ over $[t_0,\infty)$ is finite.
Thus the contribution from this region is bounded by
\[
 C\bigl(\Delta+e^{-3T/4}\bigr)
 \le C\Delta+\frac{C}{T^2}.
\]
Adding the three contributions, and using the nonnegativity of
$e^{pw^2}-1$ to enlarge the integral in \eqref{eq:sa-central}, gives
\begin{equation}\label{eq:sa-localization}
 J_r(u)
 \le eT e^{-p}\int_{\mathbb R}\bigl(e^{pw^2}-1\bigr)\,dt
       +C\Delta+\frac{C}{T^2}.
\end{equation}

\medskip
\noindent\textit{Step 3. The sharp leading term and the uniform error.}
We use Proposition~\ref{20260909-pro1} from the Appendix with the explicit
choices
\[
 B_0=4,\qquad
 A=16e^{-2}+4(2-2\log2).
\]
Thus, whenever $w\in H^1(\mathbb R)$, $H(w)\le1$, and
$B=p\max_{\mathbb R}w^2\ge4$, we may apply the estimate
\eqref{eq:sa-auxiliary} from the {\bf Appendix}.
The constants $A$ and $B_0$ are absolute and independent of $u$, $p$,
$T$, and $\Delta$.

Put
\[
 d=p\Delta,\qquad B=p(1-\Delta)=p-d.
\]
Since $\Delta\le\delta_0$, one has $B\ge(1-\delta_0)p$.
After increasing $T_0$ so that
\[
 (1-\delta_0)(T_0+\log T_0)\ge4,
\]
the condition $B\ge4=B_0$ is satisfied.
Combining \eqref{eq:sa-energy}, \eqref{eq:sa-localization}, and
\eqref{eq:sa-auxiliary}, and using $e^{-p}(e^B-1)\le e^{-d}$, yields
\[
 J_r(u)
 \le4eT e^{-d}\left(\frac{d+\alpha}{B}+\frac A{B^2}\right)
       +C\Delta+\frac C{T^2}.
\]
Define
\[
 f(d)=4e^{-d}(d+\alpha),\qquad d\ge0.
\]
The identity $B^{-1}=p^{-1}+d/(pB)$ gives
\[
 J_r(u)
 \le\frac{eT}{p}f(d)+C\Delta+\frac C{T^2}+R,
\]
where
\[
 \begin{aligned}
 R&=4eT e^{-d}\left(\frac{d(d+\alpha)}{pB}+\frac A{B^2}\right)\\
  &\le\frac{CT}{p^2}e^{-d}(d^2+d+1)
   \le\frac C p.
 \end{aligned}
\]
Here we used $T\le p$ and the boundedness of
$e^{-d}(d^2+d+1)$ on $[0,\infty)$. Also, $p\le2T$ for $T\ge T_0$,
so the term $C/T^2$ can be absorbed into $C/p$.
Consequently, there are fixed absolute constants $C_1,C_2>0$ such that
\begin{equation}\label{eq:sa-uniform-profile}
 J_r(u)
 \le\frac{eT}{p}f(d)+C_1\Delta+\frac{C_2}{p},
 \qquad \Delta\le\delta_0,\quad T\ge T_0.
\end{equation}
This estimate is uniform in $d$ throughout the admissible range
$0\le d\le\delta_0p$.

Since $0<\alpha<1$ and
\[
 f'(d)=4e^{-d}(1-\alpha-d),
\]
we have
\begin{equation}\label{eq:sa-sharp-maximum}
 \max_{d\ge0}f(d)=f(1-\alpha)=4e^{\alpha-1}=e,
 \qquad \lim_{d\to\infty}f(d)=0.
\end{equation}
Choose an absolute constant $D>0$ such that
\[
 f(d)\le\frac e2\qquad\text{for all }d\ge D.
\]
Next choose $\Delta_*\in(0,\delta_0]$ sufficiently small that
\[
 C_1\Delta_*\le\frac{e^2}{8}.
\]
Finally choose $T_*\ge T_0$ sufficiently large that
\begin{equation}\label{eq:sa-threshold-choice}
 e^2\log T_*\ge C_1D+C_2+e^2,
 \qquad
 \frac{C_2}{T_*+\log T_*}\le\frac{e^2}{8}.
\end{equation}
These choices depend only on the fixed constants above.

We claim that
\begin{equation}\label{eq:sa-peak-threshold}
 \Delta\le\Delta_*,\quad T\ge T_*
 \quad\Longrightarrow\quad J_r(u)<e^2.
\end{equation}
If $d\le D$, then $\Delta=d/p\le D/p$, and
\eqref{eq:sa-uniform-profile}--\eqref{eq:sa-threshold-choice} give
\[
 \begin{aligned}
 J_r(u)
 &\le e^2\frac Tp+\frac{C_1D+C_2}{p}\\
 &=e^2-\frac{e^2\log T-C_1D-C_2}{p}
 <e^2.
 \end{aligned}
\]
In particular, the error $C_1\Delta$ is retained here and is absorbed,
together with $C_2/p$, by the negative logarithmic correction.
If $d>D$, then
\[
 J_r(u)
 \le\frac{e^2}{2}+C_1\Delta_*+\frac{C_2}{p}
 \le\frac{3e^2}{4}<e^2.
\]
This proves \eqref{eq:sa-peak-threshold}.

\medskip
\noindent\textit{Step 4. A small-amplitude threshold valid for every function.}
The small-amplitude assumption does not by itself imply that $\Delta$
is small. We therefore also treat $\Delta\ge\Delta_*$.
Define
\[
 G(s)=\left(\frac es\log\frac es\right)^{1-\Delta_*},
 \qquad 0<s\le\frac12.
\]
Since $\Delta_*>0$,
\[
 \int_0^{1/2}G(s)\,ds
 =e\int_{t_0}^\infty t^{1-\Delta_*}e^{-\Delta_*t}\,dt<\infty.
\]
Choose $\rho\in(0,1/2)$ such that
\[
 \int_0^\rho G(s)\,ds\le\frac{e^2}{4}.
\]
Set $b_\rho=\max_{[\rho,1/2]}\beta<\infty$, and choose $m_1>0$ such that
\[
 \left(\frac12-\rho\right)
       \left(e^{b_\rho m_1^2}-1\right)\le\frac{e^2}{4}.
\]
Finally, let
\begin{equation}\label{eq:sa-amplitude-choice}
 m_0=\min\left\{m_1,
       \sqrt{(1-\Delta_*)e^{1-T_*}}\right\}>0.
\end{equation}
Every quantity in this definition is independent of $u$.

Let $u\in E$ satisfy $\|u\|_\infty\le m_0$.
If $\Delta\ge\Delta_*$, then $u(s)^2/s\le1-\Delta_*$ and hence
$e^{\beta(s)u(s)^2}\le G(s)$ for $0<s\le1/2$. Therefore
\[
 \begin{aligned}
 J_r(u)
 &\le\int_0^\rho G(s)\,ds
  +\left(\frac12-\rho\right)\left(e^{b_\rho m_0^2}-1\right)\\
 &\le\frac{e^2}{2}<e^2.
 \end{aligned}
\]
If $\Delta<\Delta_*$, then
\[
 a=\frac{u(a)^2}{1-\Delta}
 \le\frac{m_0^2}{1-\Delta_*}
 \le e^{1-T_*}.
\]
Thus $T=\log(e/a)\ge T_*$, and \eqref{eq:sa-peak-threshold} gives
$J_r(u)<e^2$ once again. This completes the proof.
\end{proof}

Now, let $\{v_j\}$ be a NCS:
$$\|v_j\|_{H^1}=1,\quad v_j\rightharpoonup 0~\text{in}~H^1(0,1).$$
Since $H^1(0,1)\subset C^0([0,1])$ is compact,
thus $$\|v_j\|_\infty\to0.$$
For sufficient large $j$, $\|v_j\|_\infty\leq m_0.$  So, we can use
Lemma \ref{taglm3.1} to get
$$\int_0^{\frac12} e^{\beta(t)v_j^2}dt=\int_0^{\frac12} \left(e^{\beta(t)v_j^2}-1\right)dt+\frac12\leq e^2+\frac12.$$
On $[\frac12, 1]$, $\beta(t)$ is bounded and thus
$$\int_{\frac12}^1 e^{\beta(t)v_j^2}dt\to1-\frac12,\quad \text{as}~j\to\infty.$$
Therefore, for any NCS $\{v_j\}$, we have $\limsup\limits_{j\to\infty}I(v_j)\leq 1+e^2.$

This implies $L_{conc}\leq 1+e^2.$
To show that the upper bound is sharp, we provide a concentrating sequence attaining $1 + e^2$ below.

Let
$$p_*(z)=-2\log \cosh\frac{z}{2}=-2\log \frac{e^{z/2}+e^{-z/2}}{2}.$$
For $R \to\infty$, define
$$h_R^0(r)=e^{\frac{p_*(R(r-R))}{2R}}=\left(\cosh\frac{R(r-R)}{2}\right)^{-1/R}.$$
Choose $A_R > 0$ so that
$h_R(r) = A_Rh^0_R(r)$ satisfies
$$\int_1^\infty\left|h'_R(r)-\frac{1}{2}h_R(r)\right|^2dr=1.$$
Then return to the original variable $t=e^{1-r}$,
\begin{equation}\label{20260916ee22}
v_R(t):=\sqrt{t} h_R\left(\log\frac{e}{t}\right)
\end{equation}
satisfies that $v_R \in E$ and $v_R \rightharpoonup 0$ in $H^1(0, 1).$

The unnormalized energy of $h^0_R$ has the expansion
$$\int_0^\infty\left|(h^0_R)'(r)-\frac{1}{2}h^0_R(r)\right|^2dr=1+\frac{2\log 2 -1}{R}+O\left(\frac{1}{R^2}\right).$$
Hence
$$A_R^2=1+\frac{2\log 2 -1}{R}+O\left(\frac{1}{R^2}\right).$$
Thus, for $z = R(r - R)$,
$$h_R^2\left(R+\frac{z}{R}\right)=1+\frac{p_*(z)-(2\log 2-1)}{R}+o\left(\frac{1}{R}\right).$$
Substituting this into $I$ gives
$$I(v_R)-1\to ee^{-(2\log 2-1)}\int_{\mathbb{R}}e^{p_*(z)}dz=e\cdot\frac{e}{4}\cdot 4=e^2.$$
Therefore $$I(v_R)\to1+e^2.$$
This proves the exact concentration identity
$$L_{conc}=1+e^2.$$

\section{Attainment for large positive values of $c$}
From now on, we consider the linear perturbation functional
\[
I^c(u)
=\int_0^1 e^{\beta(t)u^2(t)}e^{c u^2(t)}\,dt.
\]
The original critical Bliss--Moser functional is
\[
I(u)=I^0(u)=\int_0^1 e^{\beta(t)u^2(t)}\,dt,~~
\beta(t)
=\frac{\log(e/t)+\log\log(e/t)}{t}.
\]

Before proving attainment, we first check that the supremum $\sup\limits_{u\in E}I^c(u)$ is finite.

For every \(u\in E\), Cauchy-Schwarz gives
\begin{equation}\label{tag3.3}
|u(t)|=\left|\int_0^t u'(s)\,ds\right|
\le \sqrt t\left(\int_0^t |u'(s)|^2\,ds\right)^{1/2}
\le \sqrt t\le1.
\end{equation}
Therefore
\[
e^{c u^2(t)}\le e^c.
\]
Hence
\[
I^c(u)
=\int_0^1 e^{\beta(t)u^2(t)}e^{c u^2(t)}\,dt
\le e^c I^0(u).
\]
The critical Bliss-Moser inequality in Proposition \ref{20260911-pp1} gives
\[
\sup_{u\in E} I^0(u)<\infty.
\]
Thus for any fixed $c\in(-\infty,+\infty)$
\[
\sup_{u\in E} I^c(u) <\infty.
\]

For the original critical Bliss-Moser functional $I^0$, we have proved that the concentration level is
\[
 L_{conc}=1+e^2.
\]

For every fixed finite $c$, the concentration level for $I^c$ remains equal to $L$.
Indeed, if $\{u_j\}\subset E$ is a concentrating sequence, then
\[
 \|u_j\|_{L^\infty(0,1)}\longrightarrow0.
\]
Consequently,
\[
 e^{c u_j^2}=1+o(1)
\]
uniformly, and hence
\[
 I^c(u_j)-I^0(u_j)\longrightarrow0.
\]
It follows that
\[
 L_{\mathrm{conc}}(c):=\sup\limits_{v_j\in E,~v_j~\text{is~NCS}}\limsup\limits_{j\to\infty}I^c(v_j)=1+e^2
\]
for every fixed $c\in\mathbb R$.

On the other hand, consider the fixed function
$v(t)=t.$
Since
\[
 \int_0^1|v'(t)|^2\,dt=1,
\]
we have $v\in E$. Moreover,
\[
 I^c(v)=\int_0^1e^{(\beta(t)+c)t^2}\,dt
 \longrightarrow+\infty
 \qquad\text{as }c\to+\infty.
\]
Thus there exists $c^*$ such that
\[
 I^c(v)>1+e^2
 \qquad\text{for every }c>c^*.
\]

For example, for $\beta\geq0$, then
\[
 I^c(v)
 \geq \int_{1/2}^1e^{ct^2}\,dt
 \geq \frac12e^{c/4}.
\]
Therefore the explicit, although nonoptimal, condition
\[
 c>4\log\bigl(2(1+e^2)\bigr)
\]
is sufficient to ensure that $I^c(v)>1+e^2$.

Let
\[
 M(c):=\sup_{u\in E}I^c(u).
\]
For such $c$ we have
\[
 M(c)>L_{\mathrm{conc}}(c).
\]
Therefore a maximizing sequence cannot concentrate. The
concentration-compactness alternative then gives compactness of a maximizing
sequence and produces an extremal. A more detailed explanation is given  below.

Assume now
\[
c>c^*.
\]
Then
\[
M(c)>1+e^2.
\]
Let $\{u_j\}\in E$ be a maximizing sequence:
\[
I^c(u_j)\to M(c).
\]
Since \(\|u_j'\|_2=1\) and \(u_j(0)=0\), the sequence is bounded in \(H^1(0,1)\). Therefore, after passing to a subsequence,
\[
u_j\rightharpoonup U
\quad\text{weakly in }H^1(0,1).
\]

We claim that \(U\not\equiv0\). If \(U\equiv0\), then \(u_j\) is a concentrating sequence. By the concentration estimate for the modified functional,
\[
\limsup_{j\to\infty} I^c(u_j)
\le1+e^2.
\]
But
\[
I^c(u_j)\to M_c>1+e^2,
\]
which is impossible. Therefore
\[
U\not\equiv0.
\]

Since \(U\not\equiv0\), similar to  Proposition \ref{pro2.2} we can prove:
\[
I^c(u_j)\to I^c(U).
\]

Therefore,
\[
I^c(U)=M(c).
\]

Next, we prove
that no energy is lost in the limit and thus  $U\in  E$.

Weak lower semicontinuity gives
\[
a:=\int_0^1 |U'(t)|^2\,dt\le1.
\]
Since \(U\not\equiv0\), we have
$a>0$.
We now show that in fact \(a=1\). Suppose instead that $a<1.$
Define
\[
V(t)=\frac{U(t)}{\sqrt a}.
\]
Then
\[
V(0)=0, \quad
\int_0^1 |V'(t)|^2\,dt
=\frac1a\int_0^1 |U'(t)|^2\,dt
=1.
\]
So $V\in E.$

Because \(a<1\), we have \(1/a>1\). Since \(U\not\equiv0\), the inequality
\[
V^2(t)=\frac{U^2(t)}a>U^2(t)
\]
holds on a set of positive measure. Also \(\beta_n(t)>0\) for \(0<t\le1\), and $c>c^*>0$. Hence
\[
e^{(\beta(t)+c)V^2(t)}>e^{(\beta(t)+c)U^2(t)}
\]
on a set of positive measure. Therefore
\[
I^c(V)>I^c(U)=M(c).
\]
This contradicts the definition of \(M(c)\). Hence our assumption \(a<1\) was false. Therefore
\[
\int_0^1 |U'(t)|^2\,dt=1.
\]
Thus
\[
U\in E.
\]

Since
\[
I^c(U)=M(c),
\]
the supremum is attained by \(U\).
This completes the proof of Theorem \ref{Th3}-(i).

\section{Non-attainment for large negative values of $c$}

Write $c=-K$, where $K>0$. Then
\[
 I^{-K}(u)=\int_0^1e^{(\beta(t)-K)u(t)^2}\,dt,\quad \beta(t)=\frac{\log\frac{e}{t}+\log\log\frac{e}{t}}{t}.
\]
In this section, we prove Theorem \ref{Th3}-(ii). It suffices to prove the following theorem.
\begin{theorem}\label{20260908-th1}
There exists $K_0=\beta(e^{-4})$ such that for every $K \geq K_0$,
\begin{equation}
\sup_{u\in E} I^{-K}(u) = L, \quad \text{and} ~ I^{-K}(u) < L ~ \text{for every fixed } u \in E.
\end{equation}
Consequently, the supremum is not attained. Here
$L=1+e^2$.
\end{theorem}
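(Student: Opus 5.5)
The plan has two parts: first show $\sup_{u\in E}I^{-K}(u)\ge L$ using a concentrating sequence, then show the strict inequality $I^{-K}(u)<L$ for every fixed $u\in E$. Non-attainment then follows at once.

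\emph{Lower bound.} I take the sequence $v_R$ from \eqref{20260916ee22}, which is a NCS with $I(v_R)\to 1+e^2$. Since $\|v_R\|_\infty\to0$, we have $e^{-Kv_R^2}=1+o(1)$ uniformly. This is exactly the argument already used to show $L_{\mathrm{conc}}(c)=1+e^2$, and it gives $I^{-K}(v_R)\to 1+e^2$. Hence $\sup_E I^{-K}\ge L$.

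\emph{Upper bound and strictness.} I write
$$I^{-K}(u)=1+\int_0^1\bigl(e^{(\beta(t)-K)u^2}-1\bigr)\,dt.$$
Since $\beta$ is decreasing on $(0,1]$ and $K\ge K_0=\beta(e^{-4})$, we have $\beta(t)-K\le0$ for $t\ge e^{-4}$. So the integrand is $\le0$ on $[e^{-4},1]$, and it is strictly negative wherever $u\neq0$ there. On $(0,e^{-4})\subset(0,\tfrac12)$ the integrand is dominated by $e^{\beta u^2}-1$, with strict inequality wherever $u\ne0$. Since $u\not\equiv0$, it therefore suffices to prove
$$\int_0^{e^{-4}}\bigl(e^{(\beta-K)u^2}-1\bigr)\,dt\le e^2.$$
Strictness then comes automatically from the set of positive measure where $u\neq0$: it lies either in $(0,e^{-4})$, where the bound is strictly improved, or in $[e^{-4},1]$, where it produces a strictly negative contribution.

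\emph{Case 1: small amplitude.} If $\|u\|_\infty\le m_0$, Lemma~\ref{taglm3.1} gives $\int_0^{1/2}(e^{\beta u^2}-1)\le e^2$. The integrand of $I^{-K}$ is no larger on $(0,e^{-4})$, and it is $\le0$ on $[e^{-4},1]$. So the required bound holds.

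\emph{Case 2: large amplitude, the main obstacle.} Suppose $\|u\|_\infty>m_0$, and let $a$, $\Delta$, $T$ be as in Step~1 of the proof of Lemma~\ref{taglm3.1}.

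If $\Delta\ge\Delta_*$, I bound the integrand on $(0,e^{-4})$ by $G(s)$ as in Step~4. On $[e^{-4},1]$ it is $\le0$. I then need $\int_0^{e^{-4}}G\le e^2$. If this fails with the paper's constants, I will instead split at $\rho$ as in Step~4 and use that the amplitude is at most $\sqrt{t}\le e^{-2}$ there.

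If $\Delta<\Delta_*$ and $T\ge T_*$, the estimate \eqref{eq:sa-peak-threshold} already gives $J_r(u)<e^2$. That estimate only uses the peak structure, not the amplitude, so it applies here as well.

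The remaining configuration is $\Delta<\Delta_*$ with $T<T_*$, i.e. the peak sits at a non-small point $a\ge e^{1-T_*}$. Here the peak value is $u(a)^2=(1-\Delta)a$, which is bounded below. I would use the factor $e^{-Ku^2}$ to kill the concentration near $a$. On the region where $u^2\ge a/2$, the integrand carries $e^{-Ka/2}$. Away from that region, $u^2/s$ has a fixed deficit, as in \eqref{eq:sa-deficit}, so the integrand is dominated by an integrable function with small integral.

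The key quantitative step I expect to be hardest is checking that $K\ge\beta(e^{-4})$ is large enough for the resulting bound to be $\le e^2$ uniformly over this compact range of $T$. If $a\ge e^{-4}$, the peak lies in the region where the exponent $(\beta-K)u^2$ is already nonpositive, and only the part of $(0,e^{-4})$ where $u^2/s$ stays below $1-c$ contributes. I would try to bound that part by $\int_0^{e^{-4}}(e s^{-1}\log(e/s))^{1-c}\,ds$ and check numerically that it is less than $e^2$.
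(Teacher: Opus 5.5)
You have a genuine gap in Case~2. Your lower bound and Case~1 are correct and match the paper (the sequence $v_R$, and Lemma~\ref{taglm3.1} used as in Lemma~\ref{20260908-lm1}). You are also right that \eqref{eq:sa-peak-threshold} needs only $\Delta\le\Delta_*$ and $T\ge T_*$, not the amplitude bound. The trouble is the rest of Case~2 ($\Delta\ge\Delta_*$, or $\Delta<\Delta_*$ with $T<T_*$), which is where the whole difficulty sits, and none of the arguments you sketch there works.

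\emph{The case $\Delta\ge\Delta_*$.} Since $\Delta_*<\frac{1}{4}$ by construction,
$\int_0^{e^{-4}}G\,ds=e\int_5^\infty t^{1-\Delta_*}e^{-\Delta_* t}\,dt\ge e\int_5^\infty t^{3/4}e^{-t/4}\,dt\approx16>e^2$.
So the bound fails for every admissible $\Delta_*$. The fallback does not help either. On $[\rho,e^{-4}]$ you only know $u^2\le t$, so $\beta u^2$ can be as large as $\log\frac{e}{\rho}+\log\log\frac{e}{\rho}$. In Step~4 of Lemma~\ref{taglm3.1}, the piece $[\rho,\frac{1}{2}]$ is harmless only because $m_1$ is chosen \emph{after} $\rho$ so that $b_\rho m_1^2$ is small. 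That is exactly the hypothesis you have dropped.

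\emph{The case $\Delta<\Delta_*$, $T<T_*$.} $T_*$ is not quantified (it comes from the unspecified $C_1,C_2,D$), while $K=\beta(e^{-4})\approx361$ is fixed. For peaks at $e^{1-T_*}\le a\ll 1/K$, the factor $e^{-Ku^2}\approx e^{-Ka}$ is essentially $1$ where the mass sits, so it kills nothing. There you would need the sharp bound $\int_0^{1/2}(e^{\beta u^2}-1)<e^2$ for near-optimal profiles at moderate scales. Lemma~\ref{taglm3.1} does not give this outside its asymptotic regime, and you do not supply it.

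\emph{The check for $a\ge e^{-4}$.} This fails as stated too. For any deficit constant $c\le\frac{1}{4}$ the integral $e\int_5^\infty t^{1-c}e^{-ct}\,dt$ exceeds $e^2$, and the convex-combination argument behind \eqref{eq:sa-deficit} yields only $c\approx0.14$. You would have to use \eqref{eq:sa-envelope} directly.

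The paper splits differently, according to whether $\int_0^{\delta_0}|u'|^2>1-\delta_0$ with $\delta_0=m_0^2/4$. The concentrated class then automatically has $\|u\|_\infty\le2\sqrt{\delta_0}=m_0$, which is your Case~1. The complement is handled in Lemma~\ref{20260908-lm0} by dominating $e^{\beta u^2}$ by $G_\delta$ on $(0,e^{-4})$. You cannot borrow that step to fill your gap, for two reasons:
\begin{itemize}
\item The deficit $u^2/s\le1-\delta$ is justified only for $s\le\delta$, and $\delta_0\ll e^{-4}$. The Moser functions $m_j$ with $e^{4}<j\le(1-\delta_0)/\delta_0$ lie in $\mathcal A_{\delta_0}$ yet have $m_j^2(s)/s=1$ at $s=1/j<e^{-4}$.
\item The bound $\int_0^{e^{-4}}G_\delta\le\frac{1}{2}$ is false for small $\delta$, by the computation above.
\end{itemize}

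What does close the argument, at the price of losing the explicit value of $K_0$, is compactness. Suppose $K_n\to\infty$, $u_n\in E$ and $I^{-K_n}(u_n)\ge L$, and let $u_n\rightharpoonup U$.
\begin{itemize}
\item If $U\not\equiv0$, the bound \eqref{260912-tag1} from the proof of Proposition~\ref{pro2.2} (valid for $\gamma=1$) makes $e^{(\beta-K_n)u_n^2}$ uniformly integrable. It tends to $0$ on $\{U\ne0\}$ and has $\limsup\le1$ on $\{U=0\}$, so $\limsup_n I^{-K_n}(u_n)\le1<L$.
\item If $U\equiv0$, then $\|u_n\|_\infty\to0$ and your Case~1 gives $I^{-K_n}(u_n)<L$.
\end{itemize}
Either way you get a contradiction, which proves the theorem for all $K$ above some non-explicit threshold. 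Obtaining $K_0=\beta(e^{-4})$ itself would require a quantitative treatment of profiles peaked at scales $e^{1-T_*}\le a\ll 1/K$. Neither your proposal nor the paper supplies this, given the defect in Lemma~\ref{20260908-lm0}.
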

In what follows, we prove $I^{-K}(u) < L ~ \text{for every fixed } u \in E$.
We divide this assertion into Lemma \ref{20260908-lm0} and Lemma \ref{20260908-lm1} for the proof.
\begin{lemma}\label{20260908-lm0}
For any fixed $\delta>0$, define
\[
 \mathcal A_\delta:=
 \left\{u\in E:
 \int_0^\delta|u'(t)|^2\,dt\leq1-\delta
 \right\}.
\]
There exists $K_0=\beta(e^{-4})$ such that
\begin{equation}\label{20260908-e1}
u\in\mathcal A_\delta,~K\geq K_0
 \quad\Longrightarrow\quad
 I^{-K}(u)< L.
\end{equation}
\end{lemma}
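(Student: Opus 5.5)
The plan is to split $(0,1)$ at the point $t_K:=e^{-4}$ and use the fact that $\beta$ is decreasing on $(0,1]$. Since $K\ge K_0=\beta(e^{-4})$, we have $\beta(t)-K\le0$ for $t\in[e^{-4},1]$. Hence $e^{(\beta-K)u^2}\le1$ there, with strict inequality wherever $u\ne0$ and $\beta(t)<K$. This gives
\[
I^{-K}(u)\le \bigl(1-e^{-4}\bigr)+\int_0^{e^{-4}}e^{(\beta(t)-K)u^2(t)}\,dt
\le 1+\int_0^{e^{-4}}\bigl(e^{\beta(t)u^2(t)}-1\bigr)\,dt ,
\]
so it suffices to show $\int_0^{e^{-4}}(e^{\beta u^2}-1)\,dt<e^2$ for $u\in\mathcal A_\delta$. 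If this inequality is only non-strict, the strict inequality must come from the tail: there $e^{-Ku^2}<1$ on a set of positive measure unless $u\equiv0$ on $[e^{-4},1]$, and that degenerate case is handled separately.

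For the head integral I would reuse the machinery of Lemma \ref{taglm3.1}, since $(0,e^{-4})\subset(0,\tfrac12)$. Step 1 (the maximal point $a$, deficit $\Delta$, $T=\log(e/a)$, $p=T+\log T$) and Step 2 (the logarithmic variable and the three-region localization) do not use small amplitude. The amplitude hypothesis entered only in Step 4, to force $T\ge T_*$ when $\Delta<\Delta_*$. I will therefore replace Step 4 by two cases.
\begin{itemize}
\item[(a)] $\Delta\ge\Delta_*$, or $a\ge e^{-T_*}$. Then the peak is either strictly subcritical or located at a fixed positive scale. In the first situation the integrand on $(0,e^{-4})$ is dominated by $G(s)=(\frac es\log\frac es)^{1-\Delta_*}$. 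In the second, Lemma \ref{Key3} gives $u^2/s\le 1-c$ on $(0,a/4)$, and only a bounded interval contributes.
\item[(b)] $\Delta<\Delta_*$ and $T\ge T_*$. Here the peak-threshold estimate \eqref{eq:sa-peak-threshold} applies verbatim and gives $J_r(u)<e^2$. Restricting to $(0,e^{-4})$ only decreases the integral, because the integrand is nonnegative.
\end{itemize}

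The hypothesis $u\in\mathcal A_\delta$ will be used in case (a) to replace smallness of the amplitude. Since $\int_0^\delta|u'|^2\le1-\delta$, we get $u^2(t)/t\le1-\delta$ for $t\le\delta$. This bounds the part of the head integral near $0$ by $\int_0^{\min(\delta,\rho)}G_\delta$, which can be made small.

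I expect the main obstacle to be case (a) when the peak $a$ sits at a moderate scale in $(e^{-T_*},e^{-4})$ with $\Delta$ not small. Crude bounds via $G$ are not obviously below $e^2$ uniformly there, and the constant $K_0=\beta(e^{-4})$ does not depend on $\delta$. My first attempt would be a direct explicit estimate, using the envelope \eqref{eq:sa-envelope} in the variable $t=\log(e/s)\ge5$. Integrating $t\,e^{-P(t)z(t)}-e^{-t}$ against the deficit bound \eqref{eq:sa-deficit} should give a quantity bounded by an explicit multiple of $\sup_{t\ge5}te^{-ct}$, and I would check that this is below $e^2$. If that fails, I would fall back on a compactness argument for fixed $\delta$: a maximizing sequence in $\mathcal A_\delta$ cannot concentrate, because the energy near $0$ is bounded by $1-\delta$. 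By Proposition \ref{pro2.2} it therefore converges, and it suffices to exclude $I^{-K}(U)=L$ for the limit $U$, which the strict tail inequality does.
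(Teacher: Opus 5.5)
Your reduction $I^{-K}(u)\le 1+\int_0^{e^{-4}}\bigl(e^{\beta u^2}-1\bigr)\,dt$ is sound. So is case (b): Steps 1--3 of Lemma~\ref{taglm3.1} never use the amplitude bound, so \eqref{eq:sa-peak-threshold} applies. But case (a) is not a residual technicality. It is the entire content of the lemma, and the proposal does not prove it. Every $u\in E$ has $\int_0^\delta|u'|^2\to0$ as $\delta\downarrow0$, so $\bigcup_{\delta>0}\mathcal A_\delta=E$. Since $K_0$ is independent of $\delta$, the statement is equivalent to $I^{-K}(u)<L$ for \emph{every} $u\in E$, and membership in $\mathcal A_\delta$ cannot replace small amplitude. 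Concretely:
\begin{itemize}
\item With $G_\delta=(\frac es\log\frac es)^{1-\delta}$, the integral $\int_0^{\delta}G_\delta\,ds=e\int_{\log(e/\delta)}^\infty t^{1-\delta}e^{-\delta t}\,dt$ is of order $\delta^{-2}$, not small. It is small only on $(0,\rho)$ with $\rho$ depending on $\delta$, which leaves $[\rho,e^{-4}]$ uncontrolled.
\item Domination by $G$ gives $e\int_5^\infty t^{1-\Delta_*}e^{-\Delta_*t}\,dt$. This already exceeds $e^2$ at $\Delta_*=\frac14$, and it is of order $\Delta_*^{-2}$ for the small, non-explicit $\Delta_*\le\delta_0<\frac14$ of Lemma~\ref{taglm3.1}.
\item For $a\ge e^{1-T_*}$, Lemma~\ref{Key3} controls $u^2/s$ only on $(0,a/4)$. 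On $[a/4,e^{-4}]$ nothing you have is sharp enough to give the constant $e^2$; a bounded interval is not a small contribution.
\end{itemize}
What is missing is a sharp, non-asymptotic estimate for peaks at moderate scales $5\le T<T_*$, or with deficit $\Delta\ge\Delta_*$. There the constants $C_1,C_2,T_*,\Delta_*$ of Lemma~\ref{taglm3.1} are not explicit.

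Neither of your fallbacks supplies this estimate.
\begin{itemize}
\item The deficit bound \eqref{eq:sa-deficit} is too lossy for a sharp constant. As derived from $z\ge x/2-\Delta$ and $z\ge\Delta$, its constant satisfies $c\le\frac15$. Then the window $|t-T|\le1$ alone gives an upper bound of order $eT/(cp)$, well above $e^2$. In the paper the constant $e^2$ comes only from the Appendix estimate, and only for $T\ge T_*$.
\item The compactness argument is circular. $I^{-K}$ is indeed continuous along weakly convergent sequences in $\mathcal A_\delta$, by dominated convergence with $G_\delta$ on $(0,\delta)$ and uniform convergence on $[\delta,1]$. (Proposition~\ref{pro2.2} is not the right tool, since the weak limit may be $0$.) So $\sup_{\mathcal A_\delta}I^{-K}=I^{-K}(U)$ for some weak limit $U$, but this produces no number. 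The strict tail inequality can only upgrade a bound $\le L$ to $<L$, and $\le L$ is exactly what is missing.
\end{itemize}

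For comparison, the paper's proof is the crude route you set aside. It dominates $e^{(\beta-K)u^2}$ by $G_\delta$ on all of $(0,e^{-4})$, asserts $\int_0^{e^{-4}}G_\delta\le\frac12$ for every $\delta\in(0,1)$, and concludes $I^{-K}(u)\le\frac32$. That argument is valid only for $\delta$ bounded away from $0$. The pointwise bound $u^2/s\le1-\delta$ on $(0,e^{-4})$ requires $e^{-4}\le\delta$, and $\int_0^{e^{-4}}G_\delta=e\int_5^\infty t^{1-\delta}e^{-\delta t}\,dt$ grows like $\delta^{-2}$. Yet the lemma is later invoked with $\delta=\delta_0=m_0^2/4$, so the paper does not contain the missing idea either.
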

\begin{proof}
Let
\[
G_{\delta}(s) = (e/s) \log(e/s)^{1-\delta}.
\]
It is easy to see that for any $\delta \in (0, 1)$
\begin{equation*}
\int_0^{e^{-4}} G_{\delta}(s) ds \leq 1/2.
\end{equation*}

Define $\rho_0:=e^{-4}$,
\begin{equation*}
K_0 := \beta(\rho_0).
\end{equation*}

Let $u \in\mathcal A_\delta$ and $K \geq K_0$. When $0 < s < \rho_0$,
\[
\frac{u(s)^2}{s} \leq \int_0^s |u'(\tau)|^2 d\tau \leq 1 - \delta.
\]
Hence
\begin{equation}\label{20260908-e54}
\int_0^{\rho_0} e^{(\beta(s)-K)u(s)^2} ds \leq \int_0^{\rho_0} G_{\delta}(s) ds \leq 1/2.
\end{equation}

And on $[\rho_0, 1]$, $\beta(s) \leq \beta(\rho_0) \leq K$, so
\[
e^{(\beta(s)-K)u(s)^2} \leq 1.
\]
Combining with \eqref{20260908-e54}, we obtain
\begin{equation}
I^{-K}(u) \leq 1/2 + 1 - \rho_0 < 3/2 < L.
\end{equation}
Thus, \eqref{20260908-e1} holds.
\end{proof}

Next, we consider functions close to concentration. We use Lemma \ref{taglm3.1} (local estimate under small amplitude) in Sec.4
to establish a refined Carleson--Chang-type estimate.
\begin{lemma}\label{20260908-lm1}(Refined Carleson--Chang-type estimate)
There exists $\delta_0=\frac{m_0^2}{4}$ such that
\[K\geq \beta(1/2),\quad
 \int_0^{\delta_0}|u'(t)|^2\,dt>1-\delta_0
 \quad\Longrightarrow\quad
 I^{-K}(u)< L
\]
for every $u\in E$. Here, the constant $m_0$ is given by Lemma \ref{taglm3.1}.
\end{lemma}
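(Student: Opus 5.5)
The plan is to show that concentration of Dirichlet energy in $[0,\delta_0]$ forces $u$ to have small amplitude. Lemma \ref{taglm3.1} then controls the singular part, and the negative perturbation $-K$ makes everything else strictly worse than the free contribution $1$.

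\emph{Step 1: small amplitude.} Let $u\in E$ with $\int_0^{\delta_0}|u'|^2>1-\delta_0$, so that $\int_{\delta_0}^1|u'|^2<\delta_0$. For $t\le\delta_0$, the estimate \eqref{basic} gives $u(t)^2\le t\le\delta_0$. For $t\ge\delta_0$ we write $u(t)=u(\delta_0)+\int_{\delta_0}^t u'$, which gives
\[
|u(t)|\le\sqrt{\delta_0}+\sqrt{(1-\delta_0)\delta_0}\le2\sqrt{\delta_0}.
\]
With $\delta_0=m_0^2/4$ this yields $\|u\|_{L^\infty(0,1)}\le m_0$, which is exactly the hypothesis of Lemma \ref{taglm3.1}.

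\emph{Step 2: splitting the integral.} We split
\[
I^{-K}(u)=\int_0^{1/2}e^{(\beta-K)u^2}\,dt+\int_{1/2}^1e^{(\beta-K)u^2}\,dt .
\]
On $(0,1/2]$ we have $e^{(\beta-K)u^2}\le e^{\beta u^2}$ because $K>0$. Lemma \ref{taglm3.1} then gives
\[
\int_0^{1/2}e^{(\beta-K)u^2}\le \tfrac12+J_{1/2}(u)\le \tfrac12+e^2 .
\]
On $[1/2,1]$ the function $\beta$ is decreasing (its numerator decreases and its denominator increases), so $\beta(t)\le\beta(1/2)\le K$. Hence $e^{(\beta-K)u^2}\le1$ there, and this piece is at most $1/2$. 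Summing the two pieces gives $I^{-K}(u)\le1+e^2=L$.

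\emph{Step 3: strict inequality.} This is the main obstacle, since the estimates above only give $\le$. I would argue on $[1/2,1]$. If $u\equiv0$ on $[1/2,1]$, then $\int_{1/2}^1|u'|^2=0$, so $u$ vanishes identically near $t=1/2$. If $\beta(t)-K<0$ on a subinterval where $u\ne0$, the integrand is $<1$ there and the inequality is strict. Since $\beta$ is strictly decreasing, $\beta(t)<\beta(1/2)\le K$ for $t>1/2$. The only remaining case is therefore $u\equiv0$ on $(1/2,1]$, and in that case $u\in E$ is supported in $[0,1/2]$.

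To handle that case, I would look back at the proof of Lemma \ref{taglm3.1}, which actually yields the strict bound $J_r(u)<e^2$: every branch in Steps 3–4 of that proof ends with $<e^2$. Using this strict version, the first piece is strictly less than $\tfrac12+e^2$ in all cases, and strict inequality $I^{-K}(u)<L$ follows immediately, without the case analysis above. So I will cite the strict form of Lemma \ref{taglm3.1} as established in its proof. The $[1/2,1]$ argument remains only as a backup if one insists on the stated $\le$ form.
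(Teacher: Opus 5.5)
Your proof is correct and has the same structure as the paper's: the amplitude bound $\|u\|_\infty\le2\sqrt{\delta_0}=m_0$, then Lemma \ref{taglm3.1} on $(0,1/2)$, then $\beta\le\beta(1/2)\le K$ on $[1/2,1]$. The one real difference is where the strict inequality comes from.

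\textbf{Your route.} You reopen the proof of Lemma \ref{taglm3.1} and use that every branch there actually ends with $J_{1/2}(u)<e^2$. That is true, but it is more than the lemma states. It ties your result to the internal structure of that proof.

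\textbf{The paper's route.} It uses only the stated bound $J_{1/2}(u)\le e^2$ together with $K>0$, and gets strictness from the interval $(0,1/2)$ itself, via $\int_0^{1/2}e^{\beta u^2}\bigl(1-e^{-Ku^2}\bigr)\,ds>0$. In your terms: the inequality $e^{(\beta-K)u^2}\le e^{\beta u^2}$ from your Step 2 is strict wherever $u\neq0$. Moreover $u\not\equiv0$ on $(0,\delta_0)\subset(0,1/2)$, because $u(0)=0$ and $\int_0^{\delta_0}|u'|^2>1-\delta_0>0$. This one-line observation finishes the proof with no case analysis.

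That observation is the backup you need if one insists on the stated (non-strict) form of the lemma. Your $[1/2,1]$ backup does not close by itself: the case $u\equiv0$ on $(1/2,1]$ is compatible with the hypothesis, and you leave it open.
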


\begin{proof}
Define
\begin{equation*}
C_\delta = \bigl\{ u \in E : \int_0^\delta |u'(s)|^2 ds > 1 - \delta \bigr\}.
\end{equation*}

For $u \in C_\delta$, there is the basic amplitude estimate
\begin{equation*}
\|u\|_\infty \leq 2\sqrt{\delta}.
\end{equation*}
Indeed, for $0 \leq s \leq \delta$, by \eqref{tag3.3},
\[
|u(s)| \leq \sqrt{s} \leq \sqrt{\delta}.
\]
And for $\delta \leq s \leq 1$, from $\int_\delta^1 |u'|^2 < \delta$,
\[
|u(s)| \leq |u(\delta)| + \sqrt{s-\delta} \Bigl(\int_\delta^s |u'(\tau)|^2 d\tau\Bigr)^{1/2}
\leq 2\sqrt{\delta}.
\]

Fix
\begin{equation*}
r = 1/2.
\end{equation*}
Choose a fixed $\delta_0$ such that
\begin{equation*}
\delta_0 = \frac{m_0^2}{4}<r=\frac{1}{2},
\end{equation*}
where $m_0$ is from Lemma \ref{taglm3.1}. Under this, we have $\|u\|_\infty\leq m_0$. Then we can use Lemma \ref{taglm3.1} to get that for all $u \in C_{\delta_0}$,
\begin{equation}\label{20260916-eee1}
J_r(u)= \int_0^r \bigl[e^{\beta(s)u(s)^2} - 1\bigr] ds\leq e^2.
\end{equation}

Let
\begin{equation*}
A_r(u) := \int_r^1 \bigl[e^{\beta(s)u(s)^2} - 1\bigr] ds.
\end{equation*}
Obviously, both $J_r(u)$ and $A_r(u)$ are nonnegative, and we have the exact decomposition
\begin{equation}\label{tag2.4}
I^0(u) = 1 + J_r(u) + A_r(u).
\end{equation}
Substituting \eqref{20260916-eee1} into the exact decomposition \eqref{tag2.4}, we obtain
\begin{equation*}
I^0(u) \leq L + A_r(u).
\end{equation*}

Define the perturbation loss
\begin{equation}\label{O3}
I^0(u) - I^{-K}(u)= \int_0^1 e^{\beta(s)u(s)^2} \bigl(1 - e^{-K u(s)^2}\bigr) ds := D_K(u) \qquad\text{for every }u\in E.
\end{equation}

Since $\beta(s)$ decreases in $(0,1]$, for all $s \in [r, 1]$,
\begin{equation}\label{tag2.8}
\beta(s) \leq \beta(r).
\end{equation}
Take
\begin{equation*}
K \geq \beta(r).
\end{equation*}
Then on $[r, 1]$, $\beta(s) - K \leq 0$, so
\[
e^{(\beta(s)-K)u(s)^2} \leq 1.
\]
Hence, pointwise, we have
\begin{equation*}
\begin{aligned}
e^{\beta(s)u(s)^2} \bigl(1 - e^{-K u(s)^2}\bigr)
&= e^{\beta(s)u(s)^2} - e^{(\beta(s)-K)u(s)^2} \\
&\geq e^{\beta(s)u(s)^2} - 1.
\end{aligned}
\end{equation*}
Integrating over $s \in [r, 1]$ and using the nonnegativity of the perturbation loss on $(0,r)$, we obtain
\begin{equation}\label{tag2.11}
A_r(u) \leq \int_r^1 e^{\beta(s)u(s)^2} \bigl(1 - e^{-K u(s)^2}\bigr) ds \leq D_K(u).
\end{equation}

More precisely, directly rearranging from the definitions gives
\begin{equation*}
\begin{aligned}
D_K(u) - A_r(u) &= \int_0^r e^{\beta(s)u(s)^2} \bigl(1 - e^{-K u(s)^2}\bigr) ds \\
&\quad + \int_r^1 \bigl[1 - e^{(\beta(s)-K)u(s)^2}\bigr] ds.
\end{aligned}
\end{equation*}
Both terms are nonnegative.
Moreover, since $\delta_0 < r$ and there is positive derivative energy on $(0,\delta_0)$, $u$ cannot be identically zero on $(0,r)$.
When $K > 0$ and $u$ is not identically zero on $(0,r)$, by continuity the set where $u \neq 0$ has positive measure in that interval, so the first term is strictly positive. Therefore, when $K \geq \beta(r) > 0$,
\begin{equation}\label{tag2.13}
A_r(u) < D_K(u).
\end{equation}

Therefore
\begin{equation*}
\begin{aligned}
I^{-K}(u) &= I^0(u) - D_K(u) \\
&\leq L + A_r(u) - D_K(u) \\
&< L.
\end{aligned}
\end{equation*}

That is,
\begin{equation*}
u \in C_{\delta_0},\; K \geq \beta(r) \quad \Rightarrow \quad I^{-K}(u) < L.
\end{equation*}

This completes the proof of Lemma \ref{20260908-lm1} (the refined Carleson-Chang-type estimate).
\end{proof}

Now, combining Lemma \ref{20260908-lm0} and Lemma \ref{20260908-lm1}, we have proved
\begin{equation}\label{tag56}
I^{-K}(u) < L~\text{for~every}~u \in E,~K\geq \max\{\beta(e^{-4}),\beta(1/2)\}=\beta(e^{-4}).
\end{equation}

In Sec. 4, we find a sequence $\{v_R\}\subset E$ \eqref{20260916ee22} such that $v_R \rightharpoonup 0$ in $H^1(0, 1),$ and $$I^0(v_R)\to L,\quad\text{as}~R\to\infty.$$

From $v_R \rightharpoonup 0$ in $H^1(0, 1)$,
Since $H^1(0,1)\subset C^0([0,1])$ is compact,
thus $$\|v_R\|_\infty\to0,\quad\text{as} ~R\to\infty.$$
And then
\begin{equation}
I^0(v_R) - I^{-K}(v_R)= \int_0^1 e^{\beta(s)v_R(s)^2} \bigl(1 - e^{-K v_R(s)^2}\bigr) ds \leq \bigl(1 - e^{-K \|v_R\|_\infty^2}\bigr) \int_0^1 e^{\beta(s)v_R(s)^2}  ds\to0,
\end{equation}
as $R\to\infty$.

Therefore
$$\lim\limits_{R\to\infty} I^{-K}(v_R)=L.$$
This together with \eqref{tag56} implies that:
there exists a finite constant $K_0:= \beta(e^{-4})$ such that
   \begin{equation}\label{tag6.23}
    \sup\limits_{u\in E}I^{-K}(u)=L,\quad\text{for~every~}K \geq K_0.
    \end{equation}

{\it --The proof of Theorem \ref{20260908-th1}.}

\begin{proof}
Suppose that the supremum for some $K \geq K_0$ is attained by $u_* \in E$. By \eqref{tag6.23},
\[
I^{-K}(u_*) = L.
\]
But \eqref{tag56} holds for every fixed $u \in E$, so $I^{-K}(u_*) < L$, a contradiction. Therefore, the supremum is not attained, and Theorem \ref{20260908-th1} is proved. Theorem \ref{Th3}-(ii) is also proved.
\end{proof}

\section{The critical perturbation threshold}

Recall that
\[
I^c(u)=\int_0^1
\exp\left((\beta(t)+c)u^2(t)\right)\,dt,
\qquad u\in E .
\]
Define the attainability set
\[
\mathcal A
:=
\left\{
c\in\mathbb R:
\sup_{u\in E} I^c(u)
\ \text{is attained}
\right\}.
\]
By Theorem \ref{Th3}-(i), there exists
$c_1\in\mathbb R$ such that
\[
(c_1,+\infty)\subset \mathcal A ,
\]
while by Theorem \ref{Th3}-(ii), there exists
$c_2\in\mathbb R$ such that
\[
(-\infty,c_2)\cap\mathcal A=\emptyset .
\]
Hence $\mathcal A$ is non-empty and bounded from below.

Define
\[
c_0:=\inf\mathcal A .
\]

Then $c_0\in\mathbb R$. By the monotonicity of the family
$\{I^c\}_{c\in\mathbb R}$ with respect to $c$, the value $c_0$
serves as the transition point between the attainability and
non-attainability regimes. We now prove this assertion.

\begin{proof}[Proof of Theorem\, \ref{critical-threshold}]
By the definition of $c_0$,
for every $c<c_0$ we have
\[
c\notin\mathcal A,
\]
and hence the supremum is not attained.

It remains to prove that every $c>c_0$ belongs to $\mathcal A$.

First suppose that $c_0\in\mathcal A$.
Then there exists $u_0\in E$ such that
\[
I^{c_0}(u_0)
=
\sup_{u\in E}I^{c_0}(u).
\]
Since the concentration level is
\[
L_{\mathrm{conc}}=1+e^2,
\]
we have
\[
I^{c_0}(u_0)\ge 1+e^2 .
\]
For every $c>c_0$,
\[
I^c(u_0)>I^{c_0}(u_0)
\ge1+e^2 .
\]
Therefore,
\[
\sup_{u\in E}I^c(u)>L_{\mathrm{conc}} .
\]
By the concentration-compactness result  (Proposition \ref{pro2.2}), the supremum is attained.

Now assume that $c_0\notin\mathcal A$.
For every $\delta>0$, by the definition of the infimum,
there exists
\[
c_\delta>c_0,
\qquad
c_\delta-c_0<\delta ,
\]
such that
\[
c_\delta\in\mathcal A .
\]
Let $u_\delta$ be an extremal function for
$I^{c_\delta}$.
Then
\[
I^{c_\delta}(u_\delta)
=
\sup_{u\in E}I^{c_\delta}(u)
\ge1+e^2.
\]
For every $c>c_\delta$,
\[
I^c(u_\delta)>I^{c_\delta}(u_\delta)
\ge1+e^2.
\]
Hence the supremum of $I^c$ is attained.

Finally, if there existed
\[
\bar c>c_0
\]
such that the supremum of $I^{\bar c}$ were not attained,
choose
\[
c_0<c_\delta<\bar c
\]
with $c_\delta\in\mathcal A$.
The previous argument would imply that the supremum for
$\bar c$ is attained, a contradiction.

Therefore,
\[
(c_0,+\infty)\subset\mathcal A .
\]
The theorem follows.
\end{proof}

\section{Appendix. Proof of the auxiliary estimate in Lemma \ref{taglm3.1}}

\begin{proposition}\label{20260909-pro1}
There exist absolute constants $A>0$ and $B_0>0$ such that the following
holds. If $w\in H^1(\mathbb R)$, $p>0$,
\[
 H(w):=\int_{\mathbb R}
 \left(|w'(t)|^2+\frac{w(t)^2}{4}\right)\,dt\le1,
\]
and
\[
 M:=\max_{\mathbb R}w^2,\qquad B:=pM\ge B_0,
\]
then
\begin{equation}\label{eq:sa-auxiliary}
 \int_{\mathbb R}\bigl(e^{pw^2}-1\bigr)\,dt
 \le4(e^B-1)
 \left(\frac pB-1+\frac\alpha B+\frac A{B^2}\right),
 \qquad \alpha:=2-2\log2.
\end{equation}
In fact, one may take
\[
B_0=4,
 \qquad A=16e^{-2}+4(2-2\log2).
\]
\end{proposition}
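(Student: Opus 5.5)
The plan is to symmetrize first, then estimate level sets with a one-dimensional energy inequality, and finally integrate using the layer-cake formula.

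\emph{Reduction and energy splitting.} Replacing $w$ by $|w|$ and then by its symmetric decreasing rearrangement $w^\sharp$ leaves the distribution function of $w^2$, and hence both $M$ and the left-hand side of \eqref{eq:sa-auxiliary}, unchanged. By P\'olya--Szeg\H{o} it does not increase $H$, so we may assume $w\ge0$ is even and nonincreasing on $[0,\infty)$ with $w(0)^2=M$. The key algebraic identity is
\[
|w'|^2+\frac{w^2}{4}=\Bigl(|w'|-\frac w2\Bigr)^2+w|w'|.
\]
Integrating over $[0,\infty)$ and using $\int_0^\infty w|w'|=M/2$ gives
\[
\int_0^\infty\Bigl(|w'|-\frac w2\Bigr)^2dt\le\varepsilon:=\frac{1-M}{2}.
\]
In particular $M\le H\le1$, and equality forces $w=\sqrt M e^{-|t|/2}$.

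\emph{Level-set bound.} For $0<y<M$ let $\tau(y)$ be the time at which $w^2$ reaches $y$ on $[0,\infty)$. On $[0,\tau]$ we have $w\ge\sqrt y$. Hence
\[
\frac{\tau\sqrt y}{2}-(\sqrt M-\sqrt y)\le\int_0^\tau\Bigl(\frac w2-|w'|\Bigr)\le\sqrt{\tau\varepsilon}
\]
by Cauchy--Schwarz. Solving this quadratic inequality in $\sqrt\tau$ yields
\[
\tau(y)\le\frac{2(\sqrt M-\sqrt y)}{\sqrt y}+\frac{4\varepsilon}{y}+\frac{4\sqrt{2\varepsilon(\sqrt M-\sqrt y)}}{y^{3/4}}.
\]

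\emph{Layer cake.} Write
\[
\int_{\mathbb R}(e^{pw^2}-1)\,dt=\int_0^B e^s\,\bigl|\{pw^2>s\}\bigr|\,ds=2\int_0^B e^s\,\tau(s/p)\,ds.
\]
Near the top, $s$ close to $B$, we use $\sqrt M-\sqrt y\approx (B-s)/(2p\sqrt M)$. The first term of the $\tau$ bound becomes about $(B-s)/B$. Integrated against $e^s$, it produces $e^B/B$ up to $O(e^B/B^2)$. The $\varepsilon$-term gives $4\varepsilon e^B/M$, which is of the size of $(e^B-1)(1-M)/M=(e^B-1)(p/B-1)$. The cross term is bounded by $\eta\varepsilon+\eta^{-1}(B-s)/B^2$ via Young's inequality, and is split between these two contributions. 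Optimizing that splitting, in effect solving the Euler--Lagrange ODE whose extremal is the $\cosh$-profile $p_*$ of Sec.~\ref{scl}, is what should produce the sharp constant $\alpha=2-2\log2$ rather than a cruder one. For low levels, $s\le B/2$ say, I would simply use the crude bound $\tau(y)\lesssim\log(M/y)+\varepsilon/y$. This contributes $O(e^{B/2}B)\le A e^B/B^2$ once $B\ge B_0=4$, which is where the explicit value $16e^{-2}$ in $A$ comes from.

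\emph{Main obstacle.} The hard step is getting exactly $\alpha$ and not merely some $O(1)$ constant. The Cauchy--Schwarz step above is lossy where $w'$ is not constant on $[0,\tau]$. If the crude quadratic route fails to give $\alpha$, I would instead parametrize each half-profile by $y=w^2$ and minimize the energy needed to spend time $\tau(y)$ above each level. This is a one-dimensional calculus-of-variations problem whose optimizer is explicit: $w^2=M(\cosh(\cdot))^{-1/\cdot}$ as in $h_R^0$. Comparing with it gives \eqref{eq:sa-auxiliary}, with the factor $4$ coming from two sides times the $2$ in the layer-cake formula.
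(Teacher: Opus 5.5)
Your main route has a genuine gap, and it is more basic than recovering the value of $\alpha$. You bound $\tau(y)$ level by level, spending the whole slack $\varepsilon=(1-M)/2$ at each level, and then integrate against $e^s\,ds$; this necessarily over-counts. For a single fixed level your bound is essentially attained: decay along $w'=-w/2$ from $\sqrt M$ down to a plateau at height $\sqrt y$ of half-length $4\varepsilon/y$ has $H(w)=1$ and $\tau(y)=\log(M/y)+4\varepsilon/y$. So the level-wise supremum behaves like $4\varepsilon p/s$, and $\int_0 e^s s^{-1}\,ds=\infty$.

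This has two consequences.
\begin{itemize}
\item Your low-level bound $\tau(y)\lesssim\log(M/y)+\varepsilon/y$ integrates to $+\infty$, not to $O(e^{B/2}B)$. No uniform $O(e^{B/2}B)$ bound can hold anyway, since the low levels can carry a contribution of order $e^{B/2}/M$.
\item Even restricted to $s\in[B/2,B]$, the $\varepsilon$-term gives $8\varepsilon p\int_{B/2}^B e^s s^{-1}\,ds=4\bigl(\tfrac pB-1\bigr)e^B\bigl(1+\tfrac1B+O(B^{-2})\bigr)$. This exceeds the admissible $4(e^B-1)\bigl(\tfrac pB-1\bigr)$ by about $4\bigl(\tfrac pB-1\bigr)e^B/B$. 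Since $\tfrac pB-1=\tfrac1M-1$ is unbounded for fixed $B$, the excess cannot be absorbed into $\tfrac\alpha B+\tfrac A{B^2}$. No Young splitting of the cross term helps, because the $\eta\varepsilon$ piece only adds to it.
\end{itemize}
The coefficient $4(e^B-1)$ is sharp. Take $w=\sqrt M$ on $[-L,L]$ with tails $\sqrt M\,e^{-(|t|-L)/2}$ and $L=2(1-M)/M$. Then $H(w)=1$ and $\int_{\mathbb R}(e^{pw^2}-1)\,dt=4(e^B-1)\bigl(\tfrac1M-1\bigr)+2\int_0^B\frac{e^u-1}{u}\,du$.

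What is missing is to impose the energy constraint jointly across all levels. Your fallback of parametrizing by $y=w^2$ points the right way, but it needs no explicit optimizer: the $\cosh$ profile only describes the large-$B$ shape near the maximum, and ``comparing with it'' would not prove an inequality. After your symmetrization, put $g(y)=|\tau'(y)|$ and let $\ell$ be the half-length of $\{w^2=M\}$. Each half then contributes energy $\frac{M\ell}4+\int_0^M\bigl(\frac1{4yg}+\frac{yg}4\bigr)dy$ and integral $(e^B-1)\ell+\int_0^M(e^{py}-1)g\,dy$. Subtracting $\mu=4(e^B-1)/M$ times the energy kills the $\ell$-term. Maximizing pointwise in $g$ by AM--GM, which is legitimate because $(e^{py}-1)/y\le(e^B-1)/M$, gives exactly
\[
\int_{\mathbb R}(e^{pw^2}-1)\,dt\le4(e^B-1)\Bigl(\frac pB-\mathcal J(B)\Bigr),\qquad \mathcal J(B)=\int_0^1\sqrt{1-\frac{e^{Bx}-1}{x(e^B-1)}}\,dx .
\]

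This is the paper's argument written in the $y$-variable. The paper sets $\lambda=M/(4(e^B-1))$ and $\Psi(v)=\frac{v^2}4-\lambda(e^{pv^2}-1)$, which is $\ge0$ on $[0,\sqrt M]$. It writes $H(w)=\int(|w'|^2+\Psi(w))+\lambda\int(e^{pw^2}-1)$. It then uses $|w'|^2+\Psi(w)\ge2|w'|\sqrt{\Psi(w)}$ together with the total variation from $0$ to $\sqrt M$ on each side of a maximum point, so no rearrangement is needed. The constant $\alpha$ comes from $1-\mathcal J(B)\le e^{-B/2}+\frac\alpha{B-2}$, using $\int_0^\infty\bigl(1-\sqrt{1-e^{-y}}\bigr)dy=2-2\log2$. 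The $16e^{-2}$ in $A$ comes from $e^{-B/2}\le16e^{-2}B^{-2}$, not from a low-level layer-cake estimate.
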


\begin{proof}
Since $H^1(\mathbb R)\hookrightarrow C_0(\mathbb R)$ in one dimension,
we use the continuous representative of $w$. The assumption $B\ge4$
implies $M>0$, and the maximum defining $M$ is attained. Replacing $w$
by $|w|$ does not change either side of \eqref{eq:sa-auxiliary}; hence we
may assume $w\ge0$.

Set
\[
 \lambda:=\frac{M}{4(e^B-1)},
 \qquad
 \Psi(v):=\frac{v^2}{4}-\lambda\bigl(e^{pv^2}-1\bigr),
 \qquad 0\le v\le\sqrt M.
\]
The function
\[
 x\longmapsto\frac{e^{px}-1}{x},\qquad x>0,
\]
extended continuously at $x=0$ by the value $p$, is increasing. Therefore,
for $0\le v^2\le M$,
\[
 \frac{e^{pv^2}-1}{v^2}
 \le \frac{e^{pM}-1}{M}
 =\frac{e^B-1}{M}.
\]
Consequently,
\[
 \Psi(v)\ge0,\qquad 0\le v\le\sqrt M.
\]

Let $t_0$ be a point at which $w(t_0)=\sqrt M$. Using
$a^2+b^2\ge2ab$ with
$a=|w'|$ and $b=\sqrt{\Psi(w)}$, we obtain
\begin{equation}\label{eq:sa-amgm}
 \begin{aligned}
 H(w)
 &=\int_{\mathbb R}\bigl(|w'|^2+\Psi(w)\bigr)\,dt
   +\lambda\int_{\mathbb R}(e^{pw^2}-1)\,dt\\
 &\ge2\int_{\mathbb R}|w'|\sqrt{\Psi(w)}\,dt
   +\lambda\int_{\mathbb R}(e^{pw^2}-1)\,dt.
 \end{aligned}
\end{equation}

For completeness, the total-variation step is as follows. Define
\[
 G(r):=\int_0^r\sqrt{\Psi(v)}\,dv,
 \qquad 0\le r\le\sqrt M.
\]
Because $w\in C_0(\mathbb R)$, $w(t)\to0$ as $t\to\pm\infty$.
The chain rule on each finite interval, followed by passage to the limit,
gives
\[
 \int_{-\infty}^{t_0}|w'|\sqrt{\Psi(w)}\,dt
 \ge G(\sqrt M),\qquad
 \int_{t_0}^{\infty}|w'|\sqrt{\Psi(w)}\,dt
 \ge G(\sqrt M).
\]
Indeed, each side must carry total variation at least from $0$ to
$\sqrt M$; no monotonicity of $w$ is needed. Hence
\begin{equation}\label{eq:sa-variation}
 H(w)\ge4\int_0^{\sqrt M}\sqrt{\Psi(v)}\,dv
 +\lambda\int_{\mathbb R}(e^{pw^2}-1)\,dt.
\end{equation}

Introduce
\[
 \mathcal J(B):=\int_0^1
 \sqrt{1-\frac{e^{Bx}-1}{x(e^B-1)}}\,dx,
\]
where the integrand at $x=0$ is understood by continuity. With
$x=v^2/M$ and $B=pM$, one obtains
\begin{align*}
 4\int_0^{\sqrt M}\sqrt{\Psi(v)}\,dv
 &=M\int_0^1
 \sqrt{1-\frac{e^{Bx}-1}{x(e^B-1)}}\,dx\\
 &=M\mathcal J(B).
\end{align*}
Therefore, since $H(w)\le1$, \eqref{eq:sa-variation} gives
\[
 M\mathcal J(B)+\lambda\int_{\mathbb R}(e^{pw^2}-1)\,dt\le1.
\]
Using $\lambda=M/[4(e^B-1)]$ and $M=B/p$, we conclude that
\begin{equation}\label{eq:sa-reduced}
 \int_{\mathbb R}(e^{pw^2}-1)\,dt
 \le4(e^B-1)\left(\frac pB-\mathcal J(B)\right).
\end{equation}

It remains to estimate $\mathcal J(B)$. Define
\[
 R_B(x):=\frac{e^{Bx}-1}{x(e^B-1)},
 \qquad
 F(y):=1-\sqrt{1-y},\qquad 0\le y\le1.
\]
The function $R_B$ is increasing on $(0,1]$, because
$x\mapsto(e^{Bx}-1)/x$ is increasing. Moreover, $R_B(1)=1$ and
$0\le R_B(x)\le1$. Since
\[
 1-\mathcal J(B)=\int_0^1F(R_B(x))\,dx,
\]
we estimate this integral on two subintervals.

For $0\le x\le1/2$, monotonicity gives
\[
 R_B(x)\le R_B(1/2)
 =\frac{2}{e^{B/2}+1}\le2e^{-B/2}.
\]
Since $F(y)\le y$,
\begin{equation}\label{eq:sa-first-region}
 \int_0^{1/2}F(R_B(x))\,dx\le e^{-B/2}.
\end{equation}

For $1/2\le x\le1$, we first note that
\[
 \frac{e^{Bx}-1}{e^B-1}\le e^{-B(1-x)},
\]
and therefore
\[
 R_B(x)\le\frac{e^{-B(1-x)}}x.
\]
The elementary inequality $-\log x\le2(1-x)$ for $x\in[1/2,1]$
then yields
\[
 R_B(x)\le e^{-(B-2)(1-x)}.
\]
Since $F$ is increasing,
\begin{align}
 \int_{1/2}^1F(R_B(x))\,dx
 &\le\int_{1/2}^1F\left(e^{-(B-2)(1-x)}\right)\,dx\notag\\
 &\le\frac1{B-2}\int_0^\infty F(e^{-y})\,dy.\label{eq:sa-second-region}
\end{align}
The last inequality follows from the change of variables
$y=(B-2)(1-x)$ and enlargement of the resulting interval.
Furthermore, with $v=\sqrt{1-e^{-y}}$,
\[
 \begin{aligned}
 \int_0^\infty F(e^{-y})\,dy
 &=\int_0^1\frac{2v}{1+v}\,dv\\
 &=2-2\log2=\alpha.
 \end{aligned}
\]
Combining \eqref{eq:sa-first-region} and \eqref{eq:sa-second-region},
we obtain
\begin{equation}\label{eq:sa-J-basic}
 1-\mathcal J(B)\le e^{-B/2}+\frac\alpha{B-2}.
\end{equation}

Now $B\ge4$. Since
\[
 \sup_{x>0}x^2e^{-x/2}=16e^{-2},
\]
we have
\[
 e^{-B/2}\le\frac{16e^{-2}}{B^2}.
\]
Also,
\[
 \frac\alpha{B-2}
 =\frac\alpha B+\frac{2\alpha}{B(B-2)}
 \le\frac\alpha B+\frac{4\alpha}{B^2},
\]
because $B-2\ge B/2$. Thus
\begin{equation}\label{eq:sa-J-final}
 1-\mathcal J(B)
 \le\frac\alpha B+\frac{16e^{-2}+4\alpha}{B^2}.
\end{equation}
Set
\[
 A:=16e^{-2}+4\alpha.
\]
Equation \eqref{eq:sa-J-final} implies
\[
 \frac pB-\mathcal J(B)
 \le\frac pB-1+\frac\alpha B+\frac A{B^2}.
\]
Substitution into \eqref{eq:sa-reduced} proves
\eqref{eq:sa-auxiliary}. Hence the proposition holds with
$B_0=4$ and the stated absolute constant $A$.
\end{proof}

\vspace{3mm}\par\noindent
{\bf Conflict of interest:} The authors state no conflict of interest.
\vspace{3mm}\par\noindent
{\bf Data availability statement:} Data sharing not applicable to this article as no datasets were generated or analyzed during the current study.

\end{document}